\documentclass[a4paper,11pt,oneside]{article}
\usepackage[utf8]{inputenc}
\usepackage{graphicx}
\usepackage{algorithm, algo_olm}
\usepackage{amsmath,amssymb}
\usepackage{amsthm, mathtools}
\usepackage{amsfonts}
\newtheorem{thm}{Theorem}[section]

\newtheorem{prop}[thm]{Proposition}

\newtheorem{lem}[thm]{Lemma}
\newtheorem{rem}[thm]{Remark}


\begin{document}

\title{A nonintrusive Reduced Basis Method applied to aeroacoustic simulations}
\maketitle

\begin{center}
{Fabien Casenave$^{1}$, Alexandre Ern$^{1}$, and Tony Leli\`{e}vre$^{1,2}$}\\
\end{center}
\begin{center}
\end{center}
\begin{center}
$^1$ Universit\'{e} Paris-Est, CERMICS (ENPC), 6-8 Avenue Blaise Pascal, Cit\'{e} Descartes ,  F-77455 Marne-la-Vall\'{e}e, France
\end{center}
\begin{center}
$^2$ INRIA Rocquencourt, MICMAC Team-Project, Domaine de Voluceau, B.P. 105, 78153 Le Chesnay Cedex, France 
\end{center}

\begin{abstract}
The Reduced Basis Method can be exploited in an efficient way only if the so-called affine dependence assumption on the operator and
right-hand side of the considered problem with respect to the parameters is satisfied.
When it is not, the Empirical Interpolation Method is usually used to recover this assumption approximately.
In both cases, the Reduced Basis Method requires to access and modify the assembly routines of the corresponding computational code,
leading to an intrusive procedure.
In this work, we derive variants of the EIM algorithm and explain how they can be used to
turn the Reduced Basis Method into a nonintrusive procedure.
We present examples of aeroacoustic problems solved by integral equations 
and show how our algorithms can benefit from the linear algebra tools available in the considered code.
\end{abstract}

\section{Introduction}
\label{intro}

In many problems such as optimization, uncertainty propagation, and real-time simulations, one needs
to solve a parametrized problem for many values of the parameters. Among the
various available methods to reduce the computational cost, the Reduced Basis Method (RBM) has received
increased interest over the last decade
(see \cite{Machiels,RB,Maday,prud2} for a detailed presentation and \cite{RBconv2} for some convergence results). 
Consider the following problem: Find $u_\mu\in\mathcal{V}$ such that
\begin{equation}
\label{eq:fullpb}
a_\mu(u_\mu,v)=c_\mu(v),\qquad\forall v\in\mathcal{V},
\end{equation}
where $\mu\in\mathcal{P}$ is the parameter, $a_\mu$ is a sesquilinear form, $c_\mu$ is a linear form, and $\mathcal{V}$ is a finite-dimensional functional space of size $n$,
where $n$ is typically very large.
Since the linear problem~\eqref{eq:fullpb} is written on a finite-dimensional space, we can consider the following matrix form:
\begin{equation}
\label{eq:fullpbmat}
A_\mu U_\mu = C_\mu,
\end{equation}
where $A_\mu\in\mathbb{C}^{n\times n}$ and $C_\mu\in\mathbb{C}^{n}$.
We refer to the solutions to~\eqref{eq:fullpb} as truth solutions.

The RBM allows one to compute very fast an approximation of the truth solution $u_\mu$ by means of an offline/online procedure.
The online stage is a Galerkin procedure written on a basis of
so-called truth solutions $u_{\mu_l}$, $1\leq l\leq \hat{n}\ll n$, rather than on a basis of $\mathcal{V}$.
The parameter values $\mu_l$ are selected by a greedy algorithm in the offline stage, where the functions $u_{\mu_{l}}$ of
the reduced basis are also precomputed.
Denote by $U$ the rectangular matrix of size $n\times\hat{n}$ such that $(U)_{i,l}=\gamma_i(\mu_l)$, where $\gamma_i(\mu_l)$, $1\leq i\leq n$, are the
coefficients of $u_{\mu_l}$ on the basis of $\mathcal{V}$.
Then, the RBM approximation is computed by solving the reduced problem $\hat{A}_\mu \hat{\gamma}(\mu)=\hat{C}_\mu$, where $\hat{A}_\mu=U^t A_\mu U$ and $\hat{C}_\mu=U^t C_\mu$,
so that 
\begin{equation*}
\hat{u}_\mu(x):=\sum_{l=1}^{\hat{n}}\hat{\gamma}_l(\mu)u_{\mu_l}(x)\approx u_{\mu}(x).  
\end{equation*}

The efficiency of the RBM hinges on the assumption of an affine dependence of the operator and the right-hand side with respect to the parameter.
This assumption states that
\begin{equation}
\label{eq:affdep}
(A_\mu)_{i,j}=\sum_{m=1}^d \alpha_m(\mu)(A_m)_{i,j},\qquad 1\leq i,j\leq n,
\end{equation}
where $A_m$ denote parameter independent matrices, and $\alpha_m$ are complex-valued functions of the parameter.
We only discuss the case of the operator $A_\mu$, the right-hand side $C_\mu$ being treated in the same way.
Owing to the separated representation~\eqref{eq:affdep}, the assembly of the reduced problems
and the computation of the a posteriori error bound are performed in complexity independent of $n$
(see~\cite{M2AN} for the computation of the error bound).
It consists in precomputing the matrices $\hat{A}_m=U^tA_mU$ in the offline stage, and then considering
$\hat{A}_\mu=\sum_{m=1}^d \alpha_m(\mu)\hat{A}_m$ in the online stage.
However, this procedure requires in general nontrivial modifications of the
assembling routines of the computational code since various terms of the variational formulation at hand corresponding to the
matrices $A_m$ in~\eqref{eq:affdep} have to be accessed separately.
This issue can be readily dealt with in simple cases. For instance, consider the model problem~\eqref{eq:fullpb}
where 
\begin{equation*}
\displaystyle a_\mu(u_\mu,v)=\int_{\Omega} {\nabla}u_\mu(x)\cdot {\nabla}v(x)dx+\mu \int_{\Omega}u_\mu(x)v(x)dx,
\end{equation*}
and $\mathcal{P}$ is one-dimensional.
Define $A_0$, $A_1\in\mathbb{R}^{n\times n}$ by 
\begin{equation*}
(A_0)_{i,j}=\int_{\Omega} {\nabla}\varphi_i(x)\cdot {\nabla}\varphi_j(x)dx, \qquad 1\leq i,j\leq n,
\end{equation*}
and
\begin{equation*}
(A_1)_{i,j}=\int_{\Omega}\varphi_i(x)\varphi_j(x)dx,\qquad 1\leq i,j\leq n,
\end{equation*}
where $\varphi_i(x)$ are the finite element
basis functions, so that
\begin{equation}
\label{eq:formulaAmu}
A_\mu=A_0+\mu A_1.
\end{equation}
Taking two values $\mu_1\neq \mu_2$ of the parameter,~\eqref{eq:formulaAmu} can be rewritten as
\begin{equation}
\label{eq:formuleAmunonint}
A_\mu=\frac{\mu_2-\mu}{\mu_2-\mu_1}A_{\mu_1}+\frac{\mu-\mu_1}{\mu_2-\mu_1}A_{\mu_2},
\end{equation}
which still has an affine dependence with respect to the parameter.
In~\eqref{eq:formuleAmunonint}, we only require to evaluate $A_\mu$ for some values of $\mu$. Since the matrix $A_\mu$
is the result of
the whole assembly procedure and is therefore easily accessed in computational codes, the formula~\eqref{eq:formuleAmunonint}
is called nonintrusive.
More generally, we say that a formula to compute $A_\mu$ or $C_\mu$ is nonintrusive if it only requires to access the whole matrix or
the right-hand side for some selected values of the parameter $\mu$.
The first goal of this work is to extend the idea leading to~\eqref{eq:formuleAmunonint} to more complicated parameter dependencies,
and to apply it to obtain nonintrusive
formulae for the matrix and right-hand side of~\eqref{eq:fullpbmat} with an affine dependence on the parameter.

The second objective is to develop a nonintrusive procedure to get approximate affine representations of the operator and right-hand
side, when affine dependence does not hold. In this case, the Empirical Interpolation Method (EIM) can be used.
In this work, we present variants of the classical EIM algorithm, and,
to the price of an additional EIM approximation the accuracy of which we can control, we derive, in a quite
general framework, nonintrusive approximations of, say, the system matrix in the form
\begin{equation}
\label{eq:goal}
A_\mu\approx\sum_{m=1}^r \beta_m(\mu)A_{\mu_m},
\end{equation}
where $\mu_m$, $1\leq m\leq r$, are some selected values of the parameter (which are different from the parameter values $\mu_l$,
$1\leq l\leq \hat{n}$, selected by the greedy algorithm in the offline stage of the RBM), and where $\beta_m(\mu)$ can be
computed efficiently (namely with a complexity independent of $n$).

The article is organized as follows.
In Section~\ref{sec:synthEIM}, we briefly recall the classical EIM algorithm, and present some variants that are
useful in the present context. Then, nonintrusive procedures to approximate $A_\mu$ are derived in Section~\ref{sec:nonint}.
In Section~\ref{sec:affdepavailable}, we consider the case where affine dependence is already available, and in
Section~\ref{sec:nonaffdep} the general case.
Finally, numerical simulations are presented on aeroacoustic problems solved by integral equations in Section~\ref{sec:manyqueries},
where the use of the nonintrusive formulae is crucial.

\section{Classical EIM and variants}
\label{sec:synthEIM}

Consider a function $g(\mu,x)$ defined over $\mathcal{P}\times\Omega$ for two sets $\mathcal{P}$ and $\Omega$.
We look for an approximation of this function in a separated form with respect to $\mu$ and $x$.
There are different possible ways to achieve such an approximation using EIM-like algorithms.
An EIM algorithm consists of an offline stage, where some quantities are precomputed within a greedy procedure, and an online stage
where the approximation is computed making use of these precomputed quantities.

\subsection{{\em Slice 1}}
\label{sec:slice1}

First, we recall the classical EIM as introduced in~\cite{Barrault}, see also \cite{Maday}. We denote the offline
stage of this algorithm by
${\rm EIM}^{\rm S1}$, S1 refering to {\em Slice 1}, since the first variable is treated before the second variable in the
construction.
Fix an integer $d>1$ (the total number of interpolation points). For all $1\leq k \leq d$, the rank-$k$ approximation operator
$I_{k}^{\rm S1}$ is defined as
\begin{equation}
\label{eq:onlinea}
\left(I_{k}^{\rm S1} g\right)(\mu,x) := \sum_{m=1}^{k} \lambda^{\rm S1}_m(\mu) q^{\rm S1}_{m}(x),
\end{equation}
where the functions $\lambda_m^{\rm S1}(\mu)$, $1\leq m\leq k$, solve the linear system
\begin{equation}
\label{eq:onlineapb}
\sum_{m=1}^{k}B^{\rm S1}_{l,m}{\lambda}^{\rm S1}_m(\mu)=g(\mu,x^{\rm S1}_l), \qquad \forall 1\leq l\leq k.
\end{equation}
The functions $q^{\rm S1}_{m}(\cdot)$ and the matrices $B^{\rm S1}\in\mathbb{R}^{k\times k}$, which are
lower triangular with unity diagonal, are constructed as in the offline stage
described in Algorithm~\ref{algo0}, where $\delta^{\rm S1}_k={\rm Id}-I^{\rm S1}_{k}$ and $\|\cdot\|_{\Omega}$ is a norm
on $\Omega$, for instance the $L^\infty\left(\Omega\right)$- or the $L^2\left(\Omega\right)$-norm.
In practice, the argmax appearing in Algorithm~\ref{algo0} is searched over finite subsets of
$\mathcal{P}$ and $\Omega$, denoted respectively by $\mathcal{P}_{\rm trial}$ and $\Omega_{\rm trial}$.
Note that Algorithm~\ref{algo0} also constructs the set of points $\{x_l^{\rm S1}\}_{1\leq l\leq d}$ in $ \Omega$ used in~\eqref{eq:onlineapb},
and a set of points $\{\mu_l^{\rm S1}\}_{1\leq l\leq d}$ in $\mathcal{P}$.
The following assumption is made.
\begin{itemize}
 \item[\bf(H)] The dimension of $\underset{\mu\in\mathcal{P}}{\rm Span}\left(g(\mu,\cdot),\right)$ is larger than $d$, so that
the functions\newline
$\{g(\mu_l^{\rm S1},\cdot)\}_{1\leq l\leq d}$ are linearly independent (otherwise,
$(\delta^{\rm S1}_k g)(\mu^{\rm S1}_{k+1},x^{\rm S1}_{k+1})=0$ for some $k$ in Algorithm~\ref{algo0}).
\end{itemize}

\begin{algorithm}[h!]
	\caption{Offline stage ${\rm EIM}^{\rm S1}$}
	\label{algo0}
	\begin{algorithmic}[1]
        \STATE {Choose $d>1$}
        \hfill \COMMENT{Number of interpolation points}
	\STATE {Set $k:=1$}
        \STATE {Compute $\displaystyle \mu^{\rm S1}_1:=\underset{\mu\in \mathcal{P}}{\textnormal{argmax}}\|g(\mu,\cdot)\|_{\Omega}$}
        \STATE {Compute $\displaystyle x^{\rm S1}_1:=\underset{x\in\Omega}{\textnormal{argmax}}|g(\mu^{\rm S1}_1,x)|$}
        \hfill \COMMENT{First interpolation point}
	\STATE {Set $\displaystyle q^{\rm S1}_1(\cdot):=\frac{g(\mu^{\rm S1}_1,\cdot)}{g(\mu^{\rm S1}_1,x^{\rm S1}_1)}$}
       \hfill \COMMENT{First basis function}
        \STATE {Set $B^{\rm S1}_{1,1}:=1$}
       \hfill \COMMENT{Initialize matrix $B^{\rm S1}$}
        \WHILE {$k < d$} 
		\STATE Compute $\displaystyle \mu^{\rm S1}_{k+1}:=\underset{\mu\in \mathcal{P}}{\textnormal{argmax}}\|(\delta^{\rm S1}_k g)(\mu,\cdot)\|_{\Omega}$
                \STATE Compute $\displaystyle x^{\rm S1}_{k+1}:=\underset{x\in\Omega}{\textnormal{argmax}}|(\delta^{\rm S1}_k g)(\mu^{\rm S1}_{k+1},x)|$
                \hfill \COMMENT{$(k+1)$-th interpolation point}  
                \STATE Set $\displaystyle q^{\rm S1}_{k+1}(\cdot):=\frac{(\delta^{\rm S1}_k g)(\mu^{\rm S1}_{k+1},\cdot)}{(\delta^{\rm S1}_k g)(\mu^{\rm S1}_{k+1},x^{\rm S1}_{k+1})}$
                \hfill \COMMENT{$(k+1)$-th basis function}
                \STATE Set $\displaystyle B^{\rm S1}_{k+1,i}:=q^{\rm S1}_{i}(x^{\rm S1}_{k+1})$, for all $1\leq i\leq {k+1}$
                \hfill \COMMENT{Increment matrix $B^{\rm S1}$}
                \STATE $k\leftarrow k+1$
                \hfill \COMMENT{Increment the size of the decomposition}
	\ENDWHILE
\end{algorithmic}
\end{algorithm}

The online stage of ${\rm EIM}^{\rm S1}$ amounts to~\eqref{eq:onlinea}-\eqref{eq:onlineapb} for $k=d$.
This yields
\begin{equation}
\label{eq:onlinea1}
\left(I^{\rm S1}_{d} g\right)(\mu,x) := \sum_{m=1}^{d} \lambda^{\rm S1}_m(\mu) q^{\rm S1}_{m}(x),
\end{equation}
where the functions $\lambda_m^{\rm S1}(\mu)$, $1\leq m\leq d$, solve the linear system
\begin{equation}
\label{eq:onlinea1pb}
\sum_{m=1}^{d}B^{\rm S1}_{l,m}{\lambda}^{\rm S1}_m(\mu)=g(\mu,x^{\rm S1}_l), \qquad \forall 1\leq l\leq d.
\end{equation}
Eliminating $\lambda_m^{\rm S1}(\mu)$ leads to
\begin{equation}
\label{eq:approx1}
\left(I^{\rm S1}_{d} g\right)(\mu,x) = \sum_{m=1}^{d} \sum_{l=1}^{d} (B^{\rm S1})^{-1}_{m,l} g(\mu,x^{\rm S1}_l) q^{\rm S1}_{m}(x),
\end{equation}
where the matrix $\left(B^{\rm S1}\right)^{-1}$ is computed during the offline stage.

The function $I^{\rm S1}_{d} g$ can be rewritten without using the functions $\{q^{\rm S1}_{m}\}_{1\leq m\leq d}$.
By construction, it is clear that 
$\underset{1\leq m\leq d}{\rm Span}\left(q^{\rm S1}_{m}(\cdot)\right)=\underset{1\leq m\leq d}{\rm Span}\left(g(\mu^{\rm S1}_m,\cdot)\right)$.
Therefore, there exists a matrix $\Gamma^{\rm S1}\in\mathbb{R}^{d\times d}$ such that, for all $1\leq l\leq d$,
\begin{equation}
\label{eq:def:Gammaz}
\sum_{m=1}^d(\Gamma^{\rm S1})_{l,m}q^{\rm S1}_{m}(x)= g(\mu^{\rm S1}_{l},x),\qquad \forall x\in\Omega.
\end{equation}
Owing to assumption (H), the matrix $\Gamma^{\rm S1}$ is invertible.
The construction of the matrix $\Gamma^{\rm S1}$ is detailed in Lemma~\ref{propinterpbetwpts}.
Using~\eqref{eq:def:Gammaz} in~\eqref{eq:approx1} yields
\begin{equation}
\label{eq:approx2}
\begin{aligned}
\left(I^{\rm S1}_{d} g\right)(\mu,x) &= \sum_{m=1}^{d} \sum_{l=1}^{d} \sum_{r=1}^{d} (B^{\rm S1})^{-1}_{m,l} (\Gamma^{\rm S1})^{-1}_{m,r} g(\mu,x^{\rm S1}_l)g(\mu^{\rm S1}_{r},x)\\
&= \sum_{l=1}^{d} \sum_{r=1}^{d} \Delta^{\rm S1}_{l,r} g(\mu,x^{\rm S1}_l)g(\mu^{\rm S1}_{r},x),
\end{aligned}
\end{equation}
where the matrix $\Delta^{\rm S1}:=(\Gamma^{\rm S1} (B^{\rm S1})^t)^{-1}$ can be computed during the offline stage.

\begin{lem}
\label{propinterpbetwpts}
The matrix $\Gamma^{\rm S1}$ can be constructed recursively in the loop in $k$ of~Algorithm~\ref{algo0} in the following way:
\begin{itemize}
 \item $k=1$:
\begin{equation*}
(\Gamma^{\rm S1})_{1,1}=g(\mu_{1}^{\rm S1}, x_{1}^{\rm S1}), 
\end{equation*}
 \item $k\rightarrow k+1$: 
\begin{equation*}
\begin{alignedat}{3}
(\Gamma^{\rm S1})_{k+1,k+1}&=(\delta_{k}^{\rm S1} g)(\mu^{\rm S1}_{k+1}, x^{\rm S1}_{k+1}),&&\\ 
(\Gamma^{\rm S1})_{l,k+1}&=0,&\qquad \forall 1\leq l\leq k,&\\
(\Gamma^{\rm S1})_{k+1,l}&=\kappa^{\rm S1}_{l},&\qquad \forall 1\leq l\leq k,&\\ 
\end{alignedat}
\end{equation*}
where the vector $\kappa^{\rm S1}$ is such that $\sum_{m=1}^k (B^{\rm S1})_{l,m}\kappa^{\rm S1}_{m}=g(\mu^{\rm S1}_{k+1}, x^{\rm S1}_{l})$, for all $1\leq l\leq k$.
\end{itemize}
\end{lem}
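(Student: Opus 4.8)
The plan is to establish the two claimed formulas by induction on the loop index $k$, reading $\Gamma^{\rm S1}$ through its defining relation~\eqref{eq:def:Gammaz}: the $l$-th row of $\Gamma^{\rm S1}$ collects the coordinates of the snapshot $g(\mu_l^{\rm S1},\cdot)$ in the basis $\{q_m^{\rm S1}\}$. Under assumption (H) the families $\{q_m^{\rm S1}\}$ and $\{g(\mu_l^{\rm S1},\cdot)\}$ span the same space and are each linearly independent, so these coordinates are unique; this uniqueness is what makes the recursion well posed. The base case $k=1$ is immediate: the definition $q_1^{\rm S1}=g(\mu_1^{\rm S1},\cdot)/g(\mu_1^{\rm S1},x_1^{\rm S1})$ gives $g(\mu_1^{\rm S1},x)=g(\mu_1^{\rm S1},x_1^{\rm S1})\,q_1^{\rm S1}(x)$, hence $(\Gamma^{\rm S1})_{1,1}=g(\mu_1^{\rm S1},x_1^{\rm S1})$.

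For the inductive step I would first treat the new column. For rows $l\le k$, the induction hypothesis places $g(\mu_l^{\rm S1},\cdot)$ in $\mathrm{Span}(q_1^{\rm S1},\dots,q_k^{\rm S1})$; by uniqueness of the expansion in the enlarged linearly independent family $\{q_1^{\rm S1},\dots,q_{k+1}^{\rm S1}\}$, the previously computed entries are unchanged and the coefficient on $q_{k+1}^{\rm S1}$ vanishes, giving $(\Gamma^{\rm S1})_{l,k+1}=0$ for all $1\le l\le k$. The linear independence of $q_{k+1}^{\rm S1}$ from the earlier functions follows from (H), since $q_{k+1}^{\rm S1}$ is proportional to $(\delta_k^{\rm S1} g)(\mu_{k+1}^{\rm S1},\cdot)$, whose interpolant part lies in the previous span while $g(\mu_{k+1}^{\rm S1},\cdot)$ does not.

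The heart of the step is the new row, which I would obtain from the telescoping identity $g=I_k^{\rm S1} g+\delta_k^{\rm S1} g$ evaluated at $\mu=\mu_{k+1}^{\rm S1}$. Writing $D:=(\delta_k^{\rm S1} g)(\mu_{k+1}^{\rm S1},x_{k+1}^{\rm S1})$, the definition of $q_{k+1}^{\rm S1}$ in Algorithm~\ref{algo0} gives $(\delta_k^{\rm S1} g)(\mu_{k+1}^{\rm S1},\cdot)=D\,q_{k+1}^{\rm S1}$, whereas $(I_k^{\rm S1} g)(\mu_{k+1}^{\rm S1},\cdot)=\sum_{m=1}^k\lambda_m^{\rm S1}(\mu_{k+1}^{\rm S1})\,q_m^{\rm S1}$ by~\eqref{eq:onlinea}. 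Adding the two yields the expansion of $g(\mu_{k+1}^{\rm S1},\cdot)$ in $\{q_m^{\rm S1}\}_{1\le m\le k+1}$, whence $(\Gamma^{\rm S1})_{k+1,k+1}=D$ and $(\Gamma^{\rm S1})_{k+1,m}=\lambda_m^{\rm S1}(\mu_{k+1}^{\rm S1})$ for $m\le k$.

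It then remains to identify these off-diagonal coefficients with $\kappa^{\rm S1}$. I would evaluate the online system~\eqref{eq:onlineapb} at $\mu=\mu_{k+1}^{\rm S1}$, which shows that $\lambda^{\rm S1}(\mu_{k+1}^{\rm S1})$ solves $\sum_{m=1}^k B^{\rm S1}_{l,m}\lambda_m^{\rm S1}(\mu_{k+1}^{\rm S1})=g(\mu_{k+1}^{\rm S1},x_l^{\rm S1})$ for $1\le l\le k$, which is exactly the system defining $\kappa^{\rm S1}$. As the $k\times k$ leading block of $B^{\rm S1}$ is lower triangular with unit diagonal, and thus invertible, the solution is unique, so $\lambda_m^{\rm S1}(\mu_{k+1}^{\rm S1})=\kappa_m^{\rm S1}$, closing the induction. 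I expect the only delicate point to be the bookkeeping that makes the column and row updates mutually consistent, namely the uniqueness argument guaranteeing that enlarging the basis neither perturbs the earlier entries nor creates a spurious contribution on $q_{k+1}^{\rm S1}$; the rest amounts to unwinding the definitions of $q_m^{\rm S1}$ and $\delta_k^{\rm S1}$.
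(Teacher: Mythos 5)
Your proof is correct and follows essentially the same route as the paper's: the base case from the definition of $q_1^{\rm S1}$, the vanishing column entries from uniqueness of the expansion (which the paper invokes implicitly by comparing the defining relation~\eqref{eq:def:Gammaz} at ranks $k$ and $k+1$), and the new row from the splitting $g(\mu_{k+1}^{\rm S1},\cdot)=(I_k^{\rm S1}g)(\mu_{k+1}^{\rm S1},\cdot)+(\delta_k^{\rm S1}g)(\mu_{k+1}^{\rm S1},\cdot)$ together with the online system~\eqref{eq:onlineapb} identifying the off-diagonal entries with $\kappa^{\rm S1}$. The only cosmetic difference is that the paper writes these coefficients out explicitly as $\sum_{l=1}^k(B^{\rm S1})^{-1}_{m,l}g(\mu_{k+1}^{\rm S1},x_l^{\rm S1})$, whereas you argue via uniqueness of the solution of the triangular system; the content is identical.
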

\begin{proof}
The case $k=1$ results from line 5 of Algorithm~\ref{algo0}. Suppose that the assertion holds at rank $k$.
Using the definition~\eqref{eq:def:Gammaz} of $\Gamma^{\rm S1}$ at rank $(k+1)$, for all $1\leq l\leq k$ and all $x\in\Omega$,
we infer that 
\begin{equation*}
(\Gamma^{\rm S1})_{l,k+1}q^{\rm S1}_{k+1}(x)+\sum_{m=1}^{k}(\Gamma^{\rm S1})_{l,m}q^{\rm S1}_{m}(x)=g(\mu^{\rm S1}_{l},x).
\end{equation*}
Using the same definition at rank $k$ leads to $(\Gamma^{\rm S1})_{l,k+1}=0$ for all $1\leq l\leq k$.
Then, using the same definition for $l=k+1$, we infer that
\begin{equation*}
(\Gamma^{\rm S1})_{k+1,k+1}q^{\rm S1}_{k+1}(x)+\sum_{m=1}^{k}(\Gamma^{\rm S1})_{k+1,m}q^{\rm S1}_{m}(x)=g(\mu^{\rm S1}_{k+1},x). 
\end{equation*}
Using line 10 of Algorithm~\ref{algo0}, we identify
$(\Gamma^{\rm S1})_{k+1,k+1}=(\delta^{\rm S1}_{k} g)(\mu^{\rm S1}_{k+1}, x^{\rm S1}_{k+1})$ and
$\sum_{m=1}^{k}(\Gamma^{\rm S1})_{k+1,m}q^{\rm S1}_{m}(x)=(I^{\rm S1}_k g)(\mu^{\rm S1}_{k+1}, x)$.
From~\eqref{eq:onlinea1}-\eqref{eq:onlinea1pb}, we infer that
\begin{equation*}
\sum_{m=1}^{k}(\Gamma^{\rm S1})_{k+1,m}q^{\rm S1}_{m}(x)=\sum_{l=1}^k\sum_{m=1}^k (B^{\rm S1})^{-1}_{m,l}q^{\rm S1}_{m}(x)g(\mu^{\rm S1}_{k+1}, x^{\rm S1}_{l}).
\end{equation*}
Therefore, $(\Gamma^{\rm S1})_{k+1,m}=\sum_{l=1}^k(B^{\rm S1})^{-1}_{m,l}g(\mu^{\rm S1}_{k+1}, x^{\rm S1}_{l})$, finishing the proof.\qed
\end{proof}

We recall the interpolation property of $I^{\rm S1}_d g$; see~\cite[Lemma~1]{Maday}:
\begin{prop}[Interpolation property]
\label{interp1}
For all $1\leq m\leq d$,
\begin{equation*}
\left\{
\begin{aligned}
(I^{\rm S1}_d g)(\mu,x^{\rm S1}_m) &= g(\mu,x^{\rm S1}_m), \quad \textnormal{for all } \mu\in\mathcal{P},\\
(I^{\rm S1}_d g)(\mu^{\rm S1}_m,x) &= g(\mu^{\rm S1}_m,x), \quad \textnormal{for all } x\in\Omega.
\end{aligned}
\right.
\end{equation*}
\end{prop}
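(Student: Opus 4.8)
The plan is to treat the two identities separately, since the first concerns interpolation in the spatial variable and is essentially built into the definition of the coefficients, whereas the second is an exact-reproduction statement that requires the full structure of the greedy construction.

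First I would prove the identity $(I^{\rm S1}_d g)(\mu,x^{\rm S1}_m) = g(\mu,x^{\rm S1}_m)$. Here the idea is simply to evaluate the representation \eqref{eq:onlinea1} at $x=x^{\rm S1}_l$ for a fixed index $l$. Using the definition $B^{\rm S1}_{l,m}=q^{\rm S1}_m(x^{\rm S1}_l)$ from line~10 of Algorithm~\ref{algo0}, one obtains $(I^{\rm S1}_d g)(\mu,x^{\rm S1}_l)=\sum_{m=1}^d \lambda^{\rm S1}_m(\mu)B^{\rm S1}_{l,m}$, which is exactly the left-hand side of the linear system \eqref{eq:onlinea1pb}. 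Since the $\lambda^{\rm S1}_m(\mu)$ are defined as the solution of that system, the sum equals $g(\mu,x^{\rm S1}_l)$, and this holds for every $\mu\in\mathcal{P}$. This part is immediate and involves no induction.

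For the second identity $(I^{\rm S1}_d g)(\mu^{\rm S1}_m,x)=g(\mu^{\rm S1}_m,x)$, I would argue that $I^{\rm S1}_d$, viewed as an operator acting on functions of $x$, restricts to the identity on the subspace $V:=\mathrm{Span}_{1\le m\le d}(q^{\rm S1}_m(\cdot))$. Concretely, if $f=\sum_p c_p q^{\rm S1}_p\in V$, then the data $f(x^{\rm S1}_l)=\sum_p c_p B^{\rm S1}_{l,p}$ feed into the system \eqref{eq:onlinea1pb}; because $B^{\rm S1}$ is lower triangular with unit diagonal, hence invertible, the solution is $\lambda^{\rm S1}_p=c_p$, so the interpolant recovers $f$ exactly. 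It then remains to verify that $g(\mu^{\rm S1}_m,\cdot)$ belongs to $V$, which is the span equality $V=\mathrm{Span}_{1\le m\le d}(g(\mu^{\rm S1}_m,\cdot))$ noted before \eqref{eq:def:Gammaz}. Applying the reproduction property to $f=g(\mu^{\rm S1}_m,\cdot)$ and recalling that $(I^{\rm S1}_d g)(\mu,\cdot)$ is by definition the interpolant of $g(\mu,\cdot)$ yields the claim.

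I expect the main obstacle to be the rigorous justification of the span equality, which is the only place where the greedy construction and assumption (H) really enter. I would establish it by induction on $k$: the base case follows from line~5, and in the inductive step the relation $q^{\rm S1}_{k+1}=(\delta^{\rm S1}_k g)(\mu^{\rm S1}_{k+1},\cdot)/(\delta^{\rm S1}_k g)(\mu^{\rm S1}_{k+1},x^{\rm S1}_{k+1})$ together with $\delta^{\rm S1}_k={\rm Id}-I^{\rm S1}_k$ shows that $q^{\rm S1}_{k+1}$ is a combination of $g(\mu^{\rm S1}_{k+1},\cdot)$ and $q^{\rm S1}_1,\ldots,q^{\rm S1}_k$ with a nonzero coefficient on $g(\mu^{\rm S1}_{k+1},\cdot)$. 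The nonvanishing of that coefficient, equivalently the nonvanishing of the denominator, is precisely guaranteed by (H), and this triangular dependence yields $\mathrm{Span}(q^{\rm S1}_1,\ldots,q^{\rm S1}_k)=\mathrm{Span}(g(\mu^{\rm S1}_1,\cdot),\ldots,g(\mu^{\rm S1}_k,\cdot))$ at each step. Everything else reduces to the invertibility of $B^{\rm S1}$, which is routine.
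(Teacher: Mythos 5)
Your proof is correct and follows essentially the same route as the paper's: the first identity by evaluating \eqref{eq:onlinea1} at the points $x^{\rm S1}_l$ and using $B^{\rm S1}_{l,m}=q^{\rm S1}_m(x^{\rm S1}_l)$ together with the system \eqref{eq:onlinea1pb}, and the second by exact reproduction of elements of $\mathrm{Span}_{1\le m\le d}(q^{\rm S1}_m)=\mathrm{Span}_{1\le m\le d}(g(\mu^{\rm S1}_m,\cdot))$ via the invertibility of $B^{\rm S1}$. The only differences are cosmetic: you spell out the induction behind the span equality, which the paper asserts as clear by construction, and the entries $B^{\rm S1}_{l,m}=q^{\rm S1}_m(x^{\rm S1}_l)$ are set in line 11, not line 10, of Algorithm~\ref{algo0}.
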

\begin{proof}
Let $\mu\in\mathcal{P}$.
Since $B^{\rm S1}$ is invertible, $(I^{\rm S1}_d g)(\mu,\cdot)$ is uniquely determined by~\eqref{eq:onlinea1}-\eqref{eq:onlinea1pb}
as an element of $\underset{1\leq m\leq d}{\rm Span}\left(q^{\rm S1}_m(\cdot)\right)=
\underset{1\leq m\leq d}{\rm Span}\left(g(\mu^{\rm S1}_m,\cdot)\right)$.
Replacing the values of the coefficients of $B^{\rm S1}$
defined in line 11 of Algorithm~1 in~\eqref{eq:onlinea1pb}, we infer that 
\begin{equation*}
(I^{\rm S1}_d g)(\mu,x^{\rm S1}_l)=
 \sum_{m=1}^{d} \lambda^{\rm S1}_m(\mu) q^{\rm S1}_{m}(x^{\rm S1}_l)=
\sum_{m=1}^{d} B^{\rm S1}_{l,m}\lambda^{\rm S1}_m(\mu)=g(\mu,x^{\rm S1}_l),
\end{equation*}
for all $1\leq l\leq d$. Therefore, for all $1\leq m\leq d$, $(I^{\rm S1}_d g)(\mu^{\rm S1}_l,\cdot)$ is the
element of $\underset{1\leq m\leq d}{\rm Span}\left(g(\mu^{\rm S1}_m,\cdot)\right)$ such that $(I^{\rm S1}_d g)(\mu^{\rm S1}_m,x^{\rm S1}_m)=
g(\mu^{\rm S1}_m,x^{\rm S1}_m)$. The linear independence of $\{g(\mu^{\rm S1}_m,\cdot)\}_{1\leq m\leq d}$ yields
$(I^{\rm S1}_d g)(\mu^{\rm S1}_m,x) = g(\mu^{\rm S1}_m,x)$, for all $1\leq m\leq d$ and all $x\in\Omega$.
\end{proof}

\begin{rem}[Alternative expression for $I^{\rm S1}_d$]
It is readily verified that
\begin{equation}
\label{eq:onlinea2}
\left(I^{\rm S1}_{d} g\right)(\mu,x) := \sum_{m=1}^{d} \hat{\lambda}^{\rm S1}_m(x) g(\mu,x^{\rm S1}_m),
\end{equation}
where the functions $\hat{\lambda}_m^{\rm S1}(x)$, $1\leq m\leq d$, solve the linear system
\begin{equation}
\label{eq:onlinea2pb}
\sum_{m=1}^{d}(B^{\rm S1})^t_{l,m}{\hat{\lambda}}^{\rm S1}_m(x)=q^{\rm S1}_{l}(x), \qquad \forall 1\leq l\leq d.
\end{equation}
\end{rem}

\begin{rem}[Stabilized EIM]
When $d$ is large, it can be interesting to stabilize each step in the $k$-th loop of the
offline stage of the EIM with respect to round-off errors, in the same spirit
as the stabilized Gram-Schmidt procedure; see~\cite{M2AN}. The numerical simulations presented in Section~\ref{sec:manyqueries}
use this stabilized version.
\end{rem}

\subsection{{\em Slice 2}}

A variant of Algorithm~\ref{algo0} is obtained by switching the roles of $\mu$ and $x$ in the offline stage.
We denote this variant by ${\rm EIM}^{\rm S2}$, S2 refering to {\em Slice 2}.
Fix an integer $d>1$ (the total number of interpolation points). Then, for all $1\leq k\leq d$, the rank-$k$ approximation operator
$I^{\rm S2}_{k}$ is defined as
\begin{equation}
\label{eq:onlineaa}
\left(I^{\rm S2}_{k} g\right)(\mu,x) := \sum_{m=1}^{k} \lambda^{\rm S2}_m(x) q^{\rm S2}_{m}(\mu),
\end{equation}
where the functions ${\lambda}^{\rm S2}_m(x)$, $1\leq m\leq k$, solve the linear system
\begin{equation}
\label{eq:onlineaapb}
\sum_{m=1}^{k}B^{\rm S2}_{l,m}{\lambda}^{\rm S2}_m(x)=g(\mu^{\rm S2}_l, x), \qquad \forall 1\leq l\leq k.
\end{equation}
The functions $q^{\rm S2}_{m}(\cdot)$ and the matrices $B^{\rm S2}\in\mathbb{R}^{k\times k}$, which are
lower triangular with unity diagonal, are constructed as described in Algorithm~\ref{algo1},
where $\delta^{\rm S2}_k={\rm Id}-I^{\rm S2}_{k}$ and $\|\cdot\|_{\mathcal{P}}$ is a norm
on $\mathcal{P}$, for instance the $L^\infty\left(\mathcal{P}\right)$- or the $L^2\left(\mathcal{P}\right)$-norm.
Note that Algorithm~\ref{algo1} also constructs the set of points $\{\mu_l^{\rm S2}\}_{1\leq l\leq d}$ in $\mathcal{P}$ used in~\eqref{eq:onlineaapb},
and a set of points $\{x_l^{\rm S2}\}_{1\leq l\leq d}$ in $\Omega$.
Similarly to (H), we assume that the dimension of
$\underset{1\leq l\leq d}{\rm Span}\left(g(\cdot,x_l^{\rm S2})\right)$ is $d$.
\begin{algorithm}[h!]
	\caption{Offline stage ${\rm EIM}^{\rm S2}$}
	\label{algo1}
	\begin{algorithmic}[1]
        \STATE {Choose $d>1$}
        \hfill \COMMENT{Number of interpolation points}
	\STATE {Set $k:=1$}
        \STATE {Compute $\displaystyle x^{\rm S2}_1:=\underset{x\in\Omega}{\textnormal{argmax}}\|g(\cdot,x)\|_{\mathcal{P}}$}
        \STATE {Compute $\displaystyle \mu^{\rm S2}_1:=\underset{\mu\in \mathcal{P}}{\textnormal{argmax}}|g(\mu,x^{\rm S2}_1)|$}
        \hfill \COMMENT{First interpolation point}
	\STATE {Set $\displaystyle q^{\rm S2}_1(\cdot):=\frac{g(\cdot,x^{\rm S2}_1)}{g(\mu^{\rm S2}_1,x^{\rm S2}_1)}$}
       \hfill \COMMENT{First basis function}
        \STATE {Set $B^{\rm S2}_{1,1}:=1$}
       \hfill \COMMENT{Initialize matrix $B^{\rm S2}$}
        \WHILE {$k < d$} 
                \STATE Compute $\displaystyle x^{\rm S2}_{k+1}:=\underset{x\in\Omega}{\textnormal{argmax}}\|(\delta^{\rm S2}_k g)(\cdot,x)\|_{\mathcal{P}}$
                \STATE Compute $\displaystyle \mu^{\rm S2}_{k+1}:=\underset{\mu\in \mathcal{P}}{\textnormal{argmax}}|(\delta^{\rm S2}_k g)(\mu,x^{\rm S2}_{k+1})|$
                \hfill \COMMENT{$(k+1)$-th interpolation point}  
                \STATE Set $\displaystyle q^{\rm S2}_{k+1}(\cdot):=\frac{(\delta^{\rm S2}_k g)(\cdot,x^{\rm S2}_{k+1})}{(\delta^{\rm S2}_k g)(\mu^{\rm S2}_{k+1},x^{\rm S2}_{k+1})}$
                \hfill \COMMENT{$(k+1)$-th basis function}
                \STATE Set $\displaystyle B^{\rm S2}_{k+1,i}:=q^{\rm S2}_i(\mu^{\rm S2}_{k+1})$, for all $1\leq i\leq {k+1}$
                \hfill \COMMENT{Increment matrix $B^{\rm S2}$}
                \STATE $k\leftarrow k+1$
                \hfill \COMMENT{Increment the size of the decomposition}
	\ENDWHILE
\end{algorithmic}
\end{algorithm}

In the same fashion as in Section~\ref{sec:slice1}, the approximation of $g$ is given by
\begin{equation}
\label{eq:approx1slice2}
\left(I^{\rm S2}_{d} g\right)(\mu,x) = \sum_{m=1}^{d} \sum_{l=1}^{d} (B^{\rm S2})^{-1}_{m,l} g(\mu^{\rm S2}_l,x) q^{\rm S2}_{m}(\mu),
\end{equation}
where the matrix $\left(B^{\rm S2}\right)^{-1}$ is computed during the offline stage.

\begin{rem}[Equivalence between S1 and S2]
\label{equivS1S2}
Since the roles of $x$ and $\mu$ are not symmetric, the algorithms ${\rm EIM}^{\rm S1}$ and ${\rm EIM}^{\rm S2}$ lead in general
to different approximations of the function $g$. However,
in the case where the norms are $\|\cdot\|_{\Omega}=\|\cdot\|_{L^{\infty}(\Omega)}$ and $\|\cdot\|_{\mathcal{P}}=\|\cdot\|_{L^{\infty}(\mathcal{P})}$,
it can be shown by induction on $k$ that the same sets of points $\mu_l$ and $x_l$ are selected by Algorithms~\ref{algo0}
and~\ref{algo1}, and that the same matrices $B$ and $\Gamma$ are computed. Therefore,
$(I_d^{\rm S1}g)(\mu,x)=(I_d^{\rm S2}g)(\mu,x)$ for all $(\mu,x)\in\mathcal{P}\times\Omega$.
\end{rem}

There exists a matrix $\Gamma^{\rm S2}\in\mathbb{R}^{d\times d}$ such that, for all $1\leq l\leq d$,
\begin{equation}
\label{eq:compq_m}
\sum_{m=1}^d(\Gamma^{\rm S2})_{l,m}q^{\rm S2}_{m}(\mu)= g(\mu,x^{\rm S2}_{l}),\qquad \forall \mu\in\mathcal{P}.
\end{equation}
The construction of the matrix $\Gamma^{\rm S2}$ is detailed in Lemma~\ref{propinterpbetwpts2}.
Using~\eqref{eq:compq_m} in~\eqref{eq:approx1slice2} yields
\begin{equation}
\label{eq:approx2slice2}
\begin{aligned}
\left(I^{\rm S2}_{d} g\right)(\mu,x) = \sum_{l=1}^{d} \sum_{r=1}^{d} \Delta^{\rm S2}_{l,r} g(\mu^{\rm S2}_l,x) g(\mu,x^{\rm S2}_{r}).
\end{aligned}
\end{equation}
where the matrix $\Delta^{\rm S2}:=(\Gamma^{\rm S2} (B^{\rm S2})^t)^{-1}$ can be computed during the offline stage.

\begin{lem}
\label{propinterpbetwpts2}
The matrix $\Gamma^{\rm S2}$ can be constructed recursively in the loop in $k$ of Algorithm~\ref{algo1} in the following way:
\begin{itemize}
 \item $k=1$:
\begin{equation*}
(\Gamma^{\rm S2})_{1,1}=g(\mu^{\rm S2}_{1}, x^{\rm S2}_{1}), 
\end{equation*}
 \item $k\rightarrow k+1$: 
\begin{equation*}
\begin{alignedat}{3}
(\Gamma^{\rm S2})_{k+1,k+1}&=(\delta^{\rm S2}_{k} g)(\mu^{\rm S2}_{k+1}, x^{\rm S2}_{k+1}),&&\\ 
(\Gamma^{\rm S2})_{l,k+1}&=0,&\qquad \forall 1\leq l\leq k,&\\
(\Gamma^{\rm S2})_{k+1,l}&=\kappa^{\rm S2}_{l},&\qquad \forall 1\leq l\leq k,&\\ 
\end{alignedat}
\end{equation*}
where the vector $\kappa^{\rm S2}$ is such that $\sum_{m=1}^k (B^{\rm S2})_{l,m}\kappa^{\rm S2}_{m}=g(\mu^{\rm S2}_{l}, x^{\rm S2}_{k+1})$, for all $1\leq l\leq k$.
\end{itemize}
\end{lem}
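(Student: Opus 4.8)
The plan is to mirror the proof of Lemma~\ref{propinterpbetwpts} line by line, exchanging the roles of $\mu$ and $x$ throughout, since $\Gamma^{\rm S2}$ is defined by~\eqref{eq:compq_m} in exactly the same manner that $\Gamma^{\rm S1}$ was defined by~\eqref{eq:def:Gammaz}, but with the slicing variable switched. Accordingly, I would argue by induction on $k$, the loop index of Algorithm~\ref{algo1}. For the base case $k=1$, line 5 of Algorithm~\ref{algo1} gives $q^{\rm S2}_1(\mu)=g(\mu,x^{\rm S2}_1)/g(\mu^{\rm S2}_1,x^{\rm S2}_1)$, so imposing~\eqref{eq:compq_m} for $l=1$ forces $(\Gamma^{\rm S2})_{1,1}=g(\mu^{\rm S2}_1,x^{\rm S2}_1)$.

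For the inductive step, assuming the claim at rank $k$, I would write~\eqref{eq:compq_m} at rank $k+1$ and split the sum into the first $k$ terms plus the $(k+1)$-th term. Testing against the rows $1\le l\le k$ and subtracting the rank-$k$ version of~\eqref{eq:compq_m}, valid by the induction hypothesis, immediately yields the claimed vanishing entries $(\Gamma^{\rm S2})_{l,k+1}=0$ for all $1\le l\le k$. For the diagonal entry I would take $l=k+1$ and insert the definition of $q^{\rm S2}_{k+1}$ from line 10 of Algorithm~\ref{algo1}; since its numerator is $(\delta^{\rm S2}_k g)(\mu,x^{\rm S2}_{k+1})$ and its denominator is the constant $(\delta^{\rm S2}_k g)(\mu^{\rm S2}_{k+1},x^{\rm S2}_{k+1})$, matching the $q^{\rm S2}_{k+1}$-component forces $(\Gamma^{\rm S2})_{k+1,k+1}=(\delta^{\rm S2}_k g)(\mu^{\rm S2}_{k+1},x^{\rm S2}_{k+1})$ and leaves $\sum_{m=1}^k(\Gamma^{\rm S2})_{k+1,m}q^{\rm S2}_m(\mu)=(I^{\rm S2}_k g)(\mu,x^{\rm S2}_{k+1})$, where I have used $\delta^{\rm S2}_k={\rm Id}-I^{\rm S2}_k$.

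The only point requiring genuine care — the step I expect to be the main (if modest) obstacle — is reading off the off-diagonal row $(\Gamma^{\rm S2})_{k+1,\cdot}$, since this is where the two slices truly differ in which variable is frozen and where a sign- or index-slip is easiest. I would evaluate the online representation~\eqref{eq:onlineaa}--\eqref{eq:onlineaapb} of $I^{\rm S2}_k g$ at $x=x^{\rm S2}_{k+1}$, obtaining $(I^{\rm S2}_k g)(\mu,x^{\rm S2}_{k+1})=\sum_{m=1}^k\lambda^{\rm S2}_m(x^{\rm S2}_{k+1})\,q^{\rm S2}_m(\mu)$ with coefficients solving $\sum_m B^{\rm S2}_{l,m}\lambda^{\rm S2}_m(x^{\rm S2}_{k+1})=g(\mu^{\rm S2}_l,x^{\rm S2}_{k+1})$.

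Comparing this with the expression obtained above and invoking the linear independence of $\{q^{\rm S2}_m\}_{1\le m\le k}$ then identifies $(\Gamma^{\rm S2})_{k+1,m}=\lambda^{\rm S2}_m(x^{\rm S2}_{k+1})=\kappa^{\rm S2}_m$, which is precisely the solution of the linear system defining $\kappa^{\rm S2}$ in the statement. This closes the induction and completes the proof.
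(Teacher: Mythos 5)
Your proof is correct and follows exactly the route the paper intends: the paper's own proof of Lemma~\ref{propinterpbetwpts2} is simply ``Similar to that of Lemma~\ref{propinterpbetwpts},'' and your argument is precisely that proof with the roles of $\mu$ and $x$ exchanged --- induction on $k$, using~\eqref{eq:compq_m} at ranks $k$ and $k+1$ to kill the entries $(\Gamma^{\rm S2})_{l,k+1}$, then line 10 of Algorithm~\ref{algo1} together with~\eqref{eq:onlineaa}--\eqref{eq:onlineaapb} to identify the diagonal entry and the row $(\Gamma^{\rm S2})_{k+1,\cdot}$ as the solution $\kappa^{\rm S2}$ of the stated linear system.
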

\begin{proof}
Similar to that of Lemma~\ref{propinterpbetwpts}.
\end{proof}

The following interpolation property holds.
\begin{prop}[Interpolation property]
\label{interp2}
For all $1\leq m\leq d$,
\begin{equation*}
\left\{
\begin{aligned}
(I^{\rm S2}_d g)(\mu,x^{\rm S2}_m) &=  g(\mu,x^{\rm S2}_m), \quad \textnormal{for all } \mu\in\mathcal{P},\\
(I^{\rm S2}_d g)(\mu^{\rm S2}_m,x) &=  g(\mu^{\rm S2}_m,x), \quad \textnormal{for all } x\in\Omega.
\end{aligned}
\right.
\end{equation*}
\end{prop}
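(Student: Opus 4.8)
The plan is to replay the proof of Proposition~\ref{interp1} with the roles of $\mu$ and $x$ interchanged, the two structural facts I rely on being that $B^{\rm S2}$ is lower triangular with unit diagonal (hence invertible) and that, by construction, $\underset{1\le m\le d}{\rm Span}\bigl(q^{\rm S2}_m(\cdot)\bigr)=\underset{1\le m\le d}{\rm Span}\bigl(g(\cdot,x^{\rm S2}_m)\bigr)=:W$, a $d$-dimensional space of functions of $\mu$ by the dimension assumption stated for $\mathrm{S2}$. For any fixed $x\in\Omega$, the online expansion~\eqref{eq:onlineaa} shows that $(I^{\rm S2}_d g)(\cdot,x)\in W$, and the linear system~\eqref{eq:onlineaapb} determines the coefficients $\{\lambda^{\rm S2}_m(x)\}$ uniquely.

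I would first establish the second identity, interpolation at the parameter points. Fixing $x$ and evaluating~\eqref{eq:onlineaa} at $\mu=\mu^{\rm S2}_l$, I use line~11 of Algorithm~\ref{algo1}, namely $q^{\rm S2}_m(\mu^{\rm S2}_l)=B^{\rm S2}_{l,m}$, to write $(I^{\rm S2}_d g)(\mu^{\rm S2}_l,x)=\sum_{m=1}^d B^{\rm S2}_{l,m}\lambda^{\rm S2}_m(x)$, which equals $g(\mu^{\rm S2}_l,x)$ by~\eqref{eq:onlineaapb}. Since $x$ was arbitrary, this yields $(I^{\rm S2}_d g)(\mu^{\rm S2}_m,x)=g(\mu^{\rm S2}_m,x)$ for all $x\in\Omega$ and all $1\le m\le d$.

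For the first identity, interpolation at the space points, I would fix $m$ and introduce the function of $\mu$ given by $k_m(\cdot):=(I^{\rm S2}_d g)(\cdot,x^{\rm S2}_m)-g(\cdot,x^{\rm S2}_m)$. Both terms belong to $W$ (the first by~\eqref{eq:onlineaa}, the second because $g(\cdot,x^{\rm S2}_m)$ is one of the spanning functions), so $k_m\in W$. Evaluating the second identity at $x=x^{\rm S2}_m$ shows $k_m(\mu^{\rm S2}_l)=0$ for all $1\le l\le d$. It then remains to argue that the only element of $W$ vanishing at all the points $\mu^{\rm S2}_l$ is the zero function: writing $w=\sum_j d_j q^{\rm S2}_j\in W$, the conditions $w(\mu^{\rm S2}_l)=\sum_j d_j B^{\rm S2}_{l,j}=0$ force $d_j=0$ because $B^{\rm S2}$ is invertible. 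Hence $k_m\equiv 0$, i.e. $(I^{\rm S2}_d g)(\mu,x^{\rm S2}_m)=g(\mu,x^{\rm S2}_m)$ for all $\mu\in\mathcal{P}$.

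There is no genuine obstacle here, the argument being a transcription of Proposition~\ref{interp1} under the $\mu\leftrightarrow x$ symmetry of the two algorithms. The only point deserving care is the unisolvence step in the last paragraph, which is exactly where the $\mathrm{S2}$ analog of assumption~(H) (the $d$-dimensionality of $W$, equivalently the invertibility of $B^{\rm S2}$) is used; one could alternatively invoke Remark~\ref{equivS1S2} to deduce the claim from Proposition~\ref{interp1}, but only in the special case of the $L^\infty$ norms, so the direct argument above is preferable in general.
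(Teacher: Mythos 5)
Your proof is correct and matches the paper's intent exactly: the paper's proof of Proposition~\ref{interp2} consists of the single line ``Similar to that of Proposition~\ref{interp1}'', and your argument is precisely that proof transcribed under the $\mu\leftrightarrow x$ exchange --- first the identity at the points $\mu^{\rm S2}_l$ via $q^{\rm S2}_m(\mu^{\rm S2}_l)=B^{\rm S2}_{l,m}$ and the defining linear system, then the identity at the points $x^{\rm S2}_m$ via membership in $\mathrm{Span}\bigl(q^{\rm S2}_j\bigr)=\mathrm{Span}\bigl(g(\cdot,x^{\rm S2}_j)\bigr)$ and unisolvence from the invertibility of $B^{\rm S2}$. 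Your version even makes explicit the unisolvence step that the paper leaves terse in the proof of Proposition~\ref{interp1}, so no gap remains.
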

\begin{proof}
Similar to that of Proposition~\ref{interp1}.
\end{proof}

\section{Nonintrusive procedure}
\label{sec:nonint}

The goal of this section is to obtain a nonintrusive approximation, using an offline-online procedure, of the following quantities:
\begin{equation}
\label{eq:targetqqty}
Q_t(\mu)=\sum_{s=1}^{\varsigma}\int_{\Omega}g_s(\mu,x)\Psi_{s,t}(x) dx, \qquad \forall t\in\{1\ldots N\},
\end{equation}
where $\varsigma\geq 2$, while $N$ is supposed to be large.
The functions $\Psi_{s,t}$ are basis functions or products of basis functions involved in the evaluation of the entries of the vector 
$C_\mu$ and the matrix $A_\mu$ in~\eqref{eq:fullpbmat}, see Section~\ref{sec:manyqueries} for various examples.
We want the procedure to be robust with respect to $N$. This means that EIM algorithms can only be carried out to approximate
the functions $(\mu,x)\mapsto g_s(\mu,x)$ and not the functions $(\mu,x)\mapsto g_s(\mu,x)\Psi_{s,t}(x)$.
An example is 
\begin{equation*}
a_\mu(u,v)=\int_{\Omega}g(\mu,x){\nabla} u(x)\cdot{\nabla} v(x) dx
\end{equation*}
so that
\begin{equation*}
(A_\mu)_{i,j}=\int_{\Omega}g(\mu,x){\nabla} \varphi_i(x)\cdot{\nabla} \varphi_j(x) dx,
\end{equation*}
which corresponds to~\eqref{eq:targetqqty}
with $\varsigma=1$, $t=(i,j)$, and $\Psi_{1,t}(x)={\nabla}\varphi_i(x)\cdot{\nabla}\varphi_j(x)$.

The main results of this section are approximations of~\eqref{eq:targetqqty} in the form
\begin{equation}
\label{eq:targetqqty2}
Q_t(\mu)\approx\sum_{r=1}^{d^z}\beta_r(\mu)Q_t(\mu_r),
\end{equation}
for some integer $d^z$, coefficients $\{\beta_r(\mu)\}_{1\leq r\leq d^z}$ and parameter values $\{\mu_r\}_{1\leq r\leq d^z}$.

\subsection{Affine dependence available}
\label{sec:affdepavailable}

To illustrate our main idea, we first consider the case where the function $g_s$ only depends on $\mu$, but not on $x$. This
corresponds to the case where an affine dependence is already available, so that
\begin{equation}
\label{eq:targetqqtyaff}
Q_t(\mu) = \sum_{s=1}^{\varsigma}g_s(\mu)\int_{\Omega}\Psi_{s,t}(x) dx, \qquad \forall t\in\{1\ldots N\}.
\end{equation}
The key idea is now to apply an EIM procedure to the function $g_s(\mu)$ seen as a two-variable function
\begin{equation*}
\gamma:(\mu,s)\mapsto g_s(\mu),
\end{equation*}
where $\mu\in\mathcal{P}$ and $1\leq s\leq \varsigma$.
The two approximation procedures S1($\gamma$) and S2($\gamma$) are possible for the approximation of $\gamma(\mu,s)$,
where now $s$ plays the role that $x$ played in Section~\ref{sec:synthEIM}, and where we indicate specifically in the notation that
these procedures are related to the approximation of $\gamma(\mu,s)$.
The finite sets used in practice to compute the argmax appearing in the offline stage of
the approximation procedures S1($\gamma$) and S2($\gamma$) are $\mathcal{P}_{\rm trial}$ and $\{1\ldots\varsigma\}$.
We keep the same notation as before for the constructed matrices $B$, the vector-valued functions $q_m(\cdot)$,
and the selected points $\mu_m$, while we introduce the indices $s_l$
selected by the EIM procedures to approximate $\gamma(\mu,s)$.
Employing for instance the procedure S1 and using~\eqref{eq:approx2} leads to
\begin{equation*}
g_s(\mu)\approx (I_d^{{\rm S1}(\gamma)}\gamma)(\mu,s)=\sum_{r=1}^d\underbrace{\left\{\sum_{l=1}^d \Delta^{{\rm S1}(\gamma)}_{l,r}
g_{s^{{\rm S1}(\gamma)}_l}(\mu)\right\}}_{:=\beta_r(\mu)}g_{s}(\mu^{{\rm S1}(\gamma)}_r),
\end{equation*}
where $d\leq\varsigma$ is the number of points used in the EIM applied to $\gamma$.
Using this approximation in~\eqref{eq:targetqqtyaff} and exchanging the order of summations leads to
\begin{equation*}
\begin{aligned}
Q_t(\mu) &\approx \sum_{s=1}^{\varsigma}\sum_{r=1}^{d}\beta_r(\mu)g_{s}(\mu^{{\rm S1}(\gamma)}_r)\int_{\Omega}\Psi_{s,t}(x) dx\\
&=\sum_{r=1}^{d}\beta_r(\mu)Q_t(\mu^{{\rm S1}(\gamma)}_r),
\end{aligned}
\end{equation*}
which corresponds to~\eqref{eq:targetqqty2}. A similar nonintrusive approximation can be derived using the procedure S2$(\gamma)$;
details are skipped for brevity.

\subsection{Nonaffine dependence}
\label{sec:nonaffdep}

When the affine dependence considered in Section~\ref{sec:affdepavailable} is not available, the first classical step consists in
approximating the functions $g_s(\mu,x)$ for all $1\leq s\leq\varsigma$ using the procedure S1 or S2.
This leads to the construction of $\varsigma$ sets of points $x$, points $\mu$, matrices $B$, $\Gamma$, $\Delta$, and vector-valued functions $q(\cdot)$.
We denote these quantities with an additional index $s$; for instance, ${\rm EIM}^{\rm S1}$ carried out on $g_s(\mu,x)$ leads to the construction of the
vector-valued functions $q^{\rm S1}_{s}(\cdot)$, of components $q^{\rm S1}_{s,m}:x\mapsto q^{\rm S1}_{s,m}(x)$, for all
$1\leq m\leq d$. For simplicity and without loss of generality, we assume that each EIM algorithm stops at the same rank $d$. 

Consider the procedure S1. 
Injecting the approximation~\eqref{eq:approx2} of $g_s(\mu,x)$, for all $1\leq s\leq \varsigma$,
into~\eqref{eq:targetqqty2} yields an approximation of $Q_t(\mu)$, which we denote by $(\mathcal{I}^{\rm S1}_d Q_t)(\mu)$
and which is given by
\begin{equation}
\label{approxqqtS1O1}
\begin{aligned}
(\mathcal{I}^{\rm S1}_d Q_t)(\mu)&:= \sum_{s=1}^{\varsigma}\int_{\Omega}({I}^{\rm S1}_d g_s)(\mu,x)\Psi_{s,t}(x)dx\\
&=\sum_{s=1}^{\varsigma} \sum_{m=1}^{d} \sum_{l=1}^{d} (\Delta^{\rm S1}_s)_{l,m} g_s(\mu,x^{\rm S1}_{s,l})\int_{\Omega}g_s(\mu^{\rm S1}_{s,m},x)\Psi_{s,t}(x)dx.
\end{aligned}
\end{equation}
The key idea is that~\eqref{approxqqtS1O1} is a linear form in a vector $z\in\mathbb{R}^{\varsigma d}$, whose
components, denoted by $z_p(\mu)$, $1\leq p\leq \varsigma d$ (the index $p$ collects the indices $s,m$ in~\eqref{approxqqtS1O1}), contain all the $\mu$-dependencies:
\begin{equation}
\label{eq:IS1O1z}
(\mathcal{I}_d^{\rm S1}Q_t)(\mu) = \sum_{p=1}^{\varsigma d} z_p(\mu)\mathcal{Q}_{t,p},
\end{equation}
where
\begin{equation*}
z_p(\mu):=\left\{
\begin{alignedat}{3}
&\sum_{l=1}^d (\Delta^{\rm S1}_1)_{l,m}g_1(\mu,x^{\rm S1}_{1,l}),&\qquad &1\leq m\leq d,\quad  p=m,\\
&\sum_{l=1}^d (\Delta^{\rm S1}_2)_{l,m}g_2(\mu,x^{\rm S1}_{2,l}),&\qquad &1\leq m\leq d,\quad  p=m+d,\\
&&\vdots&&&\\
&\sum_{l=1}^d (\Delta^{\rm S1}_\varsigma)_{l,m}g_\varsigma(\mu,x^{\rm S1}_{\varsigma,l}),&\qquad &1\leq m\leq d,\quad  p=m+(\varsigma-1)d,
\end{alignedat}\right.
\end{equation*}
\begin{equation*}
\mathcal{Q}_{t,p}:=\left\{
\begin{alignedat}{3}
&\int_{\Omega}g_1(\mu^{\rm S1}_{1,m},x)\Psi_{1,t}(x)dx,&\qquad &1\leq m\leq d,\quad  p=m,\\
&\int_{\Omega}g_2(\mu^{\rm S1}_{2,m},x)\Psi_{2,t}(x)dx,&\qquad &1\leq m\leq d,\quad  p=m+d,\\
&&\vdots&&&\\
&\int_{\Omega}g_\varsigma(\mu^{\rm S1}_{\varsigma,m},x)\Psi_{\varsigma,t}(x)dx,&\qquad &1\leq m\leq d,\quad  p=m+(\varsigma-1)d.
\end{alignedat}\right.
\end{equation*}
Now, following the same procedure as in Section~\ref{sec:affdepavailable}, a nonintrusive approximation for $Q_t(\mu)$ of the
form~\eqref{eq:targetqqty2} is achieved by applying another EIM to $z_p(\mu)$ seen as the two-variable function
\begin{equation*}
\zeta:(\mu,p)\mapsto z_p(\mu),
\end{equation*}
where $\mu\in\mathcal{P}$ and $1\leq p\leq \varsigma d$.
The finite sets used in practice to compute the argmax appearing in the offline stage of
the approximation procedures S1($\zeta$) and S2($\zeta$) are $\mathcal{P}_{\rm trial}$ and $\{1\ldots\varsigma d\}$.
We denote $d^z\leq \varsigma d$ the number of points used in this second EIM.

Injecting the approximation of $\zeta(\mu,p)$ using S1($\zeta$) into the right-hand side of~\eqref{eq:IS1O1z} yields
\begin{equation}
\label{2approxqqtS1O1}
\begin{aligned}
(\mathcal{I}^{\rm S1}_d Q_t)(\mu)&\approx \sum_{p=1}^{\varsigma d} (I^{{\rm S1}(\zeta)}_d \zeta)(\mu,p)\mathcal{Q}_{t,p}\\
&= \sum_{p=1}^{\varsigma d} \sum_{l=1}^{d^z}\sum_{r=1}^{d^z}
\Delta^{{\rm S1}(\zeta)}_{l,r}z_{p^{{\rm S1}(\zeta)}_l}(\mu)
z_p(\mu_r^{{\rm S1}(\zeta)})\mathcal{Q}_{t,p}.
\end{aligned}
\end{equation}
Switching the order of summations in~\eqref{2approxqqtS1O1} leads to
\begin{equation*}
\begin{aligned}
(\mathcal{I}^{\rm S1}_d Q_t)(\mu)&\approx \sum_{r=1}^{d^z} \sum_{l=1}^{d^z}
\Delta^{{\rm S1}(\zeta)}_{l,r}z_{p^{{\rm S1}(\zeta)}_l}(\mu)
\sum_{p=1}^{\varsigma d}z_p(\mu_r^{{\rm S1}(\zeta)})\mathcal{Q}_{t,p}\\
&=\sum_{r=1}^{d^z} \sum_{l=1}^{d^z}
\Delta^{{\rm S1}(\zeta)}_{l,r}z_{p^{{\rm S1}(\zeta)}_l}(\mu)
(\mathcal{I}^{\rm S1}_d Q_t)(\mu_r^{{\rm S1}(\zeta)}),
\end{aligned}
\end{equation*}
where~\eqref{eq:IS1O1z} has been used in the second line. Replacing $\mathcal{I}^{\rm S1}_d Q_t$ by $Q_t$ yields the
following nonintrusive approximation formula for $Q_t(\mu)$:
\begin{equation}
\label{eq:nonintform1}
Q_t(\mu)\approx \sum_{r=1}^{d^z} \underbrace{\left\{ \sum_{l=1}^{d^z}
\Delta^{{\rm S1}(\zeta)}_{l,r}z_{p^{{\rm S1}(\zeta)}_l}(\mu)\right\}}_{:=\beta_r(\mu)}Q_t(\mu_r^{{\rm S1}(\zeta)}).
\end{equation}
In the same fashion, injecting the approximation of $\zeta(\mu,p)$ using S2($\zeta$) in the right-hand side
of~\eqref{eq:IS1O1z} yields the following nonintrusive approximation formula of $Q_t(\mu)$:
\begin{equation}
\label{eq:nonintform2}
Q_t(\mu)\approx \sum_{r=1}^{d^z} \left\{ \sum_{l=1}^{d^z}
\Delta^{{\rm S2}(\zeta)}_{l,r}z_{p^{{\rm S2}(\zeta)}_l}(\mu)\right\}Q_t(\mu_r^{{\rm S2}(\zeta)}).
\end{equation}

It is also possible to use the procedure S2 to approximate the functions $g_s(\mu,x)$, leading to the construction of another vector $z_p(\mu)$,
which can be approximated using either S1($\zeta$) or S2($\zeta$). Details are skipped for brevity.

\begin{rem}[Alternative procedure]
Expression~\eqref{eq:IS1O1z} also holds with the following choices for $z_p(\mu)$ and $\mathcal{Q}_{t,p}$:
\label{remotherdecomp}
\begin{equation}
\label{choicezpmu}
z_p(\mu):=\left\{
\begin{alignedat}{3}
&\sum_{l=1}^d (B^{\rm S1}_1)_{m,l}^{-1}g_1(\mu,x^{\rm S1}_{1,l}),&\qquad &1\leq m\leq d,\quad  p=m,\\
&\sum_{l=1}^d (B^{\rm S1}_2)_{m,l}^{-1}g_2(\mu,x^{\rm S1}_{2,l}),&\qquad &1\leq m\leq d,\quad  p=m+d,\\
&&\vdots&&&\\
&\sum_{l=1}^d (B^{\rm S1}_\varsigma)_{m,l}^{-1}g_\varsigma(\mu,x^{\rm S1}_{\varsigma,l}),&\qquad &1\leq m\leq d,\quad  p=m+(\varsigma-1)d,
\end{alignedat}\right.
\end{equation}
\begin{equation}
\label{choiceQ}
\mathcal{Q}_{t,p}:=\left\{
\begin{alignedat}{3}
&\int_{\Omega}q^{\rm S1}_{1,m}(x)\Psi_{1,t}(x)dx,&\qquad &1\leq m\leq d,\quad  p=m,\\
&\int_{\Omega}q^{\rm S1}_{2,m}(x)\Psi_{2,t}(x)dx,&\qquad &1\leq m\leq d,\quad  p=m+d,\\
&&\vdots&&&\\
&\int_{\Omega}q^{\rm S1}_{\varsigma,m}(x)\Psi_{\varsigma,t}(x)dx,&\qquad &1\leq m\leq d,\quad  p=m+(\varsigma-1)d,
\end{alignedat}\right.
\end{equation}
leading to the same kind of nonintrusive procedures.
\end{rem}

\begin{rem}[Computational cost]
Using the approximation formula~\eqref{eq:nonintform1} to gain nonintrusiveness leads to additional computations mainly 
in the offline stage, corresponding to the EIM applied to $z_p(\mu)$.
In the online stage, the classical formula~\eqref{approxqqtS1O1} (which is intrusive) contains $\varsigma d^2$ terms,
whereas~\eqref{eq:nonintform1} contains $d_z^2$ terms. Both online formulae are of complexity independent of $N$,
and the difference of computational cost between them depends on the values of $\varsigma d^2$ and $d_z^2$.
\end{rem}

\section{Nonintrusive RBM for aeroacoustic problems}
\label{sec:manyqueries}

In this section, we consider discrete variational formulations
of aeroacoustic problems modeled by the Helmholtz equation or the convected Helmholtz equation.
The finite element method (FEM) and the boundary element method (BEM) are used to obtain the matrix and the right-hand side of the problem
\cite{casenavephd,jcp}.
The entries of both quantities are of the form $Q_t(\mu)$ as defined in~\eqref{eq:targetqqty}. For the matrix, the index~$t$ in $\Psi_{s,t}(x)$
refers to the product of two finite element basis functions,
while for the right-hand side, the index $t$ refers to the basis functions themselves.

In our simulations, we use the nonintrusive formula~\eqref{eq:nonintform2} for the matrix and the right-hand side,
with $L^\infty(\Omega)$- and $L^\infty(\mathcal{P})$-norms (so that S1 and S2 are equivalent, see Remark~\ref{equivS1S2}),
and the choice~\eqref{choicezpmu} for $z_p(\mu)$ and~\eqref{choiceQ} for $\mathcal{Q}_{t,p}$.
We only need to compute matrix-vector products
involving $A_{\mu}$ and scalar products to precompute in the offline stage all the quantities needed to construct efficiently
the reduced problem and compute the error bound in the online stage.

In Section~\ref{sec:stabilityconst}, we discuss some issues concerning the computation of the inf-sup constant associated with the
discrete problem; recall that an approximation of this constant is needed to evaluate the a posteriori error bound in the RBM.
Then, we present nonintrusive RBM simulations for three aeroacoustic
problems in Sections~\ref{sec:fin:1}, \ref{sec:fin:2}, and \ref{scalableRB}.
The in-house EADS software ACTIPOLE has been used in our simulations.

\subsection{Computation of the inf-sup constant}
\label{sec:stabilityconst}

Applying the Successive Constraint Method (SCM, see~\cite{Huynh}) as an online-efficient
procedure for computing the inf-sup constant requires to solve constrained linear optimization problems with a number of constraints proportional
to the square of the number of selected parameter values.
This is particularly demanding when considering reduced basis strategies for the (convected) Helmholtz equation approximated by BEM
with the frequency as a parameter, 
since it is required to take a rather large value of $d^z$ to obtain an accurate approximation in the form of the affine decomposition~\eqref{eq:nonintform2}.
Alternatively, the power iteration method (see~\cite{poweriteration}) associated with the inverse matrix can be used to 
approximately compute
the smallest eigenvalue of the eigenvalue problems to be solved when evaluating the inf-sup constant.
This would imply solving many eigenvalue problems associated with the inverse operator, and therefore does not appear to be
reasonable for the present industrial test cases.

We proceed as follows in our test cases.
In Sections~\ref{sec:fin:1} and~\ref{sec:fin:2}, we compute a single value of the inf-sup constant (for centered values of the parameters) and use it for any error bound evaluation.
Even if the inf-sup constant depends on the parameters, its values are not expected to exhibit significant variations since the
considered formulations do not feature any resonant frequency (for Helmholtz problems with resonant frequencies,
see~\cite{Rozzahelm}).
In Section~\ref{scalableRB}, the test case has much more unknowns than those from the two previous sections.
Therefore, we do not compute the inf-sup constant, so as to temper the offline computational cost.
Dealing further with the derivation of an online-efficient strategy to compute the inf-sup constant for test cases with a large
number of unknowns goes beyond the present scope.
Finally, for simplicity, we take the Euclidian norm of the discrete vectors in the computation of the a posteriori error bound in the
RBM.

\subsection{An optimization problem for an impedant object in the air at rest}
\label{sec:fin:1}

\begin{figure}[htbp]
	\centering
	\includegraphics [width=0.45\textwidth] {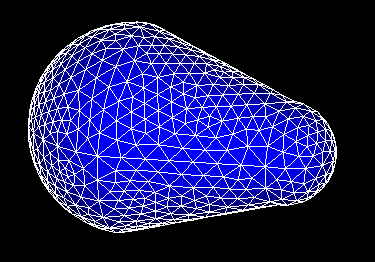}
	\includegraphics [width=0.45\textwidth] {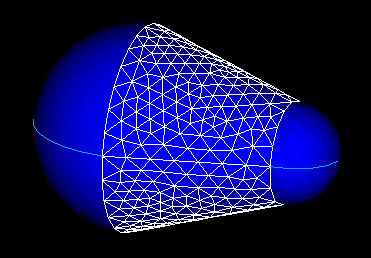}
	\caption{Test case 1. Left: mesh for test case 1. Right: impedant surface $\Gamma_2$}
\label{fig:many:cone}
\end{figure}

Consider the object whose mesh is represented in the left panel of Figure~\ref{fig:many:cone}. The surface of this object, denoted by $\Gamma$,
is partitioned into three simply connected disjoint zones denoted by $\Gamma_1$, $\Gamma_2$, and $\Gamma_3$ respectively. The surface $\Gamma_2$ is represented in
the right panel of Figure~\ref{fig:many:cone}. On each of these zones,
a Robin boundary condition is enforced with a specific impedance coefficient $\mu_i$ for $i\in\{1,2,3\}$.
Thus, the impedance coefficient on $\Gamma$, denoted by $\mu_{\Gamma}$, is piecewise constant and takes the form
$\mu_{\Gamma}(x)=\mu_1 1\!\!1_{\Gamma_1}(x)+\mu_2 1\!\!1_{\Gamma_2}(x)+\mu_3 1\!\!1_{\Gamma_3}(x)$, for all $x\in\Gamma$,
where $1\!\!1_{\Gamma_i}$, $i\in\{1,2,3\}$, are characteristic functions.
The source is a plane wave whose wave vector is supported by the axis of symmetry of the object, creating an incident acoustic
pressure field denoted by $p^{\text{inc}}_{\mu_0}$, where $\mu_0=\frac{\omega}{c}$ is the wavenumber of the source with $\omega$ the pulsation of the source
and $c$ the speed of sound in the air at rest. The variational formulation of the problem is as follows:
Find $\left(\chi,\lambda\right)\in H^\frac{1}{2}(\Gamma)\times L^2(\Gamma)$
such that for all $(\hat{\chi},\hat{\lambda})\in H^\frac{1}{2}(\Gamma)\times L^2(\Gamma)$,
\begin{equation}
\label{eq:varf}
 \left\{
\begin{aligned}
\left(N_{\mu_0}\chi-\frac{i{\mu_0}}{2\mu_{\Gamma}}\chi, \hat{\chi}\right)_{\Gamma} + \left(\tilde{D}_{\mu_0}\lambda, \hat{\chi}\right)_{\Gamma}&= \left(\gamma_1 p^{\text{inc}}_{\mu_0}, \hat{\chi}\right)_{\Gamma},\\
\left(\hat{\lambda},D_{\mu_0} \chi\right)_{\Gamma} - \left(\hat{\lambda},S_{\mu_0}\lambda +\frac{i\mu_{\Gamma}}{2{\mu_0}}\lambda\right)_{\Gamma} &= -\left(\hat{\lambda},\gamma_0 p^{\text{inc}}_{\mu_0}\right)_{\Gamma},
\end{aligned}
\right.
\end{equation}
where $\left(\cdot,\cdot\right)_{\Gamma}$ denotes the extension of the $L^2(\Gamma)$-inner product to the duality pairing on
$H^{-\frac{1}{2}}(\Gamma)\times H^{\frac{1}{2}}(\Gamma)$ and $\gamma_0$ and $\gamma_1$ respectively denote the Dirichlet and Neumann
traces on $\Gamma$.
The operators $N_{\mu_0}$, $D_{\mu_0}$, $\tilde{D}_{\mu_0}$, and $S_{\mu_0}$ are boundary integral operators expressed in terms of
the Green kernel $G_{\mu_0}(x,y)=\frac{\exp(i{\mu_0}|x-y|)}{4\pi|x-y|}$ associated with the Helmholtz equation at wavenumber ${\mu_0}$.
The pressure field around the object is then obtained by applying a representation formula to $\left(\chi,\lambda\right)$, the solution to~\eqref{eq:varf}.
We refer to~\cite[Chapter~2]{casenavephd} for more details on the formulation~\eqref{eq:varf} and its well-posedness.
The considered finite-dimensional approximation of~\eqref{eq:varf} has $2240$ unknowns.

The parameters of the problem are the frequency of the source and the impedance coefficient of each of the three zones composing the surface of the object.
The frequency varies from $487$ to $1082$ Hz, and each impedance coefficient varies from $1$ to $5$.
The quantity of interest is the far-field acoustic pressure along the axis of symmetry of the object, but in the opposite direction of the source.
A goal-oriented RBM is carried out to select a basis of $\hat{n}=20$ truth solutions using the nonintrusive formula~\eqref{eq:nonintform2} to approximate the matrix,
the right-hand side of the direct problem, and the right-hand side of the adjoint problem needed to evaluate the quantity of interest.
For the matrix, the approximation procedure S1 is applied to
\begin{equation}
\label{eq:many:matimp1}
g({\mu_0},r):=\exp\left(i{\mu_0} r\right),~r=\left|x-y\right|,~x,y\in\Gamma,
\end{equation}
and the procedure S2($\zeta$) is applied to
\begin{equation}
\label{eq:many:matimp2}
z_p({\mu_0},\mu_1, \mu_2, \mu_3):=\left\{
\begin{alignedat}{2}
&\lambda^{\rm S1}_m({\mu_0}),&\qquad &1\leq m\leq d, \quad p=m,\\
&{\mu_0}\lambda^{\rm S1}_m({\mu_0}),&\qquad &1\leq m\leq d, \quad p=m+d,\\
&\mu_0^2\lambda^{\rm S1}_m({\mu_0}),&\qquad &1\leq m\leq d, \quad p=m+2d,\\
&\frac{{\mu_0}}{\mu_1},&\qquad &p=3d+1,\\
&\frac{\mu_1}{{\mu_0}},&\qquad &p=3d+2,\\
&\frac{{\mu_0}}{\mu_2},&\qquad &p=3d+3,\\
&\frac{\mu_2}{{\mu_0}},&\qquad &p=3d+4,\\
&\frac{{\mu_0}}{\mu_3},&\qquad &p=3d+5,\\
&\frac{\mu_3}{{\mu_0}},&\qquad &p=3d+6,
\end{alignedat}\right.
\end{equation}
where we recall that $\lambda^{\rm S1}_m({\mu_0})=\sum_{l=1}^d (B^{\rm S1})_{m,l}^{-1}g(\mu_0,x^{\rm S1}_{l})$.
For the approximation formula of the right-hand side of the direct and dual problems, the procedure S1 is applied to
\begin{equation}
\label{eq:many:vecmono10}
g({\mu_0},x):=\exp\left(i{\mu_0}\vec{d}\cdot\vec{x}\right),~x\in\Gamma,
\end{equation}
where $\vec{d}$ is respectively the direction of the incoming plane wave and the direction of measure of the far-field; and the procedure S2($\zeta$) is applied to
\begin{equation}
\label{eq:many:vecmono20}
z_m({\mu_0}):=
\begin{alignedat}{2}
&\lambda^{\rm S1}_m({\mu_0}),&\qquad &1\leq m\leq d, \quad p=m.
\end{alignedat}
\end{equation}
The EIM algorithms are carried out with $d=13$ and $d^z=20$ for the matrix, and $d=13$ and $d^z=13$ for the right-hand side of the direct and dual problems.
Over the considered parameter values, the relative error for the three nonintrusive formulae is of the order of $10^{-12}$ (in Frobenius norm for the matrix and Euclidian norm for the vectors).
The maximum error bound (over a discretization $\mathcal{P}_{\rm trial}$) is of the order of $10^{-6}$, the online stage takes $2.8\times 10^{-3}$ s to compute a reduced solution and the error bound,
while the full direct problem is solved in about $30$ s in parallel on $4$ processors, which corresponds to an acceleration factor of $10^4$.

Let us now illustrate the interest of the RBM on an optimization problem, which is a natural context 
where the parametrized problem has to be solved for many values of the parameters.
Consider a set of values $\mu_{0_i}$, $1\leq i\leq \ell$, for the wavenumber
of the source and denote by $J_i(\mu_1, \mu_2, \mu_3)$, the quantity of interest computed for the wavenumber $\mu_{0_i}$ of the source and depending on the three impedance coefficients.
Consider the following cost function:
\begin{equation}
\label{eq:cost_function}
(\mu_1, \mu_2, \mu_3)\mapsto \mathcal{J}(\mu_1, \mu_2, \mu_3):=\sum_{i=1}^\ell \alpha_i J_i(\mu_1, \mu_2, \mu_3) + h(\mu_1, \mu_2, \mu_3).
\end{equation}
The goal of the study is to find values of the impedance coefficients that minimize the cost function~\eqref{eq:cost_function}.
With such a cost function, we can minimize the far-field acoustic pressure scattered by the object, taking into account that some frequencies are more harmful than others
for the human ear (through the weights $\alpha_i$), and that some treatments of the object surface to modify the impedance
coefficients are more expensive than others (through the function $h$).
To illustrate the procedure, we choose $\ell=20$, $\alpha_i=2$ for $1\leq i \leq 7$, $\alpha_i=1$ for $8\leq i \leq 13$, and $\alpha_i=3$ for $14\leq i \leq 20$, and
\begin{equation*}
h(\mu_1, \mu_2, \mu_3)=\frac{1}{6}(0.2\mu_1^{-0.5}+0.3\mu_2^{-0.8}+0.5\mu_3^{-1})-8. 
\end{equation*}
The cost function is computed for $1000$ values of the impedance coefficients (each coefficient being sampled by $10$ values). Notice that for each evaluation of the cost function,
we need to compute the solution of the aeroacoustic problem for $20$ values of the frequency.
Using the online stage, the minimum of the cost function over this sample of impedance coefficients is $0.366$, reached for $(\mu_1, \mu_2, \mu_3)=(2.8, 1, 1.9)$, and is found in less than $24$ s.

Figure~\ref{fig:many:javacone} shows a screenshot of a java applet computing the quantity of interest at $50$ values of the frequency, and at values of the impedance coefficients
selected by the user.

\begin{figure}[htbp]
	\centering
	\includegraphics [width=0.7\textwidth] {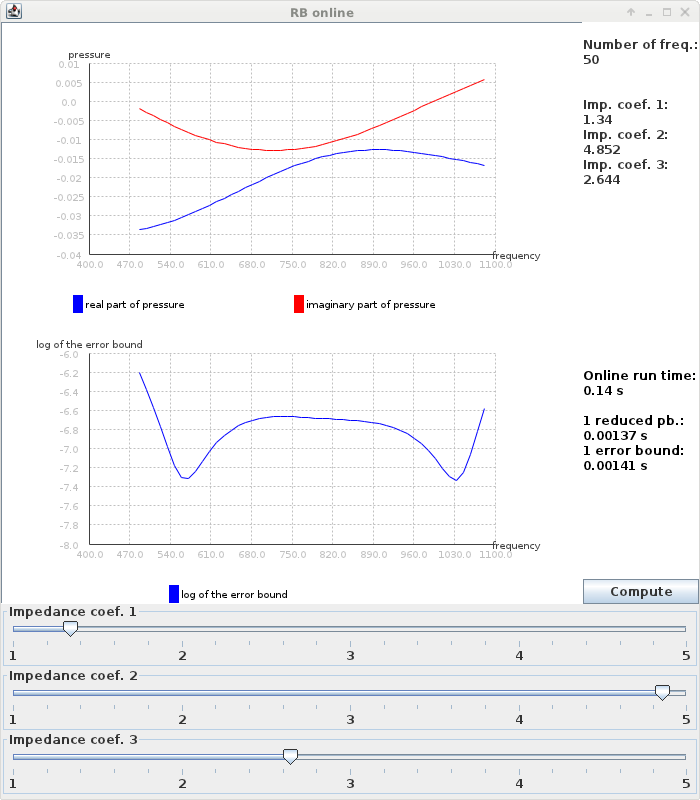}
	\caption{Java applet for the online stage of the RBM for test case 1. Top panel: real part and imaginary part of the
far-field pressure for $50$ values of the frequency. Middle panel: error bound as a function of frequency.
Bottom panel: selection of the impedance coefficients}
\label{fig:many:javacone}
\end{figure}

\subsection{An uncertainty quantification problem for an object surrounded by a potential flow}
\label{sec:fin:2}

Consider an ellipsoid with major axis directed along the $z$-axis. This object is included inside a larger ball,
see Figure~\ref{fig:mesh1}.
The external border of the ball after discretization is denoted by $\Gamma_\infty$.
The complement of the ellipsoid in the ball is denoted by $\Omega^-$.
A potential flow is precomputed around the ellipsoid and inside the ball,
such that the flow is uniform outside the ball, of Mach number $0.3$, and directed along the $z$-axis.
An acoustic monopole source lies upstream of the object, on the $z$-axis as well.

\begin{figure}[h!]
 \centering
\includegraphics[width=0.47\textwidth]{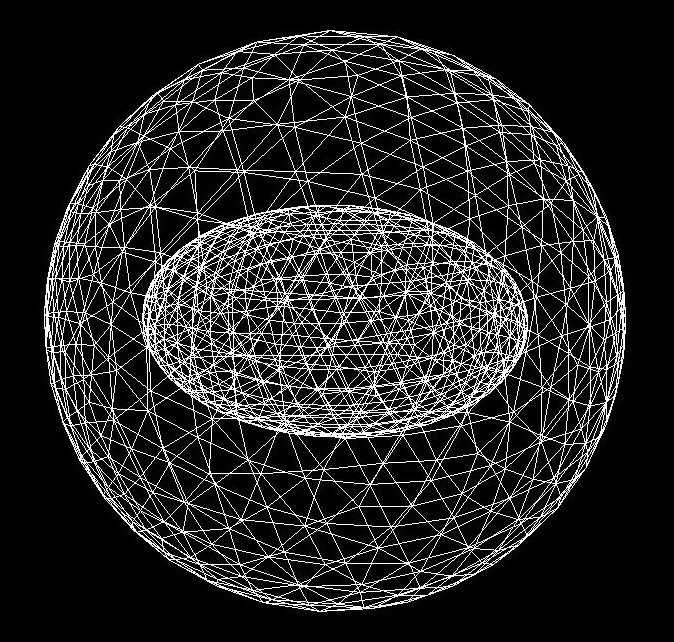}
\includegraphics[width=0.5\textwidth]{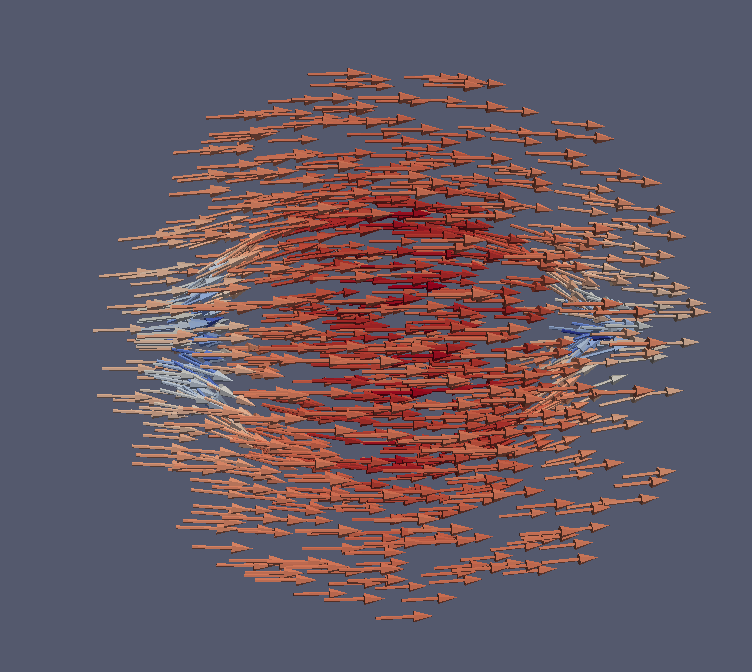}
 \caption{Test case 2. Left: representation of the mesh. Right: potential flow around the ellipsoid}
\label{fig:mesh1}
\end{figure}

The considered formulation is a coupled BEM-FEM formulation, see~\cite[Chapter~3]{casenavephd} and~\cite{jcp} for more
details and well-posedness.
It consists in (i) applying a change of variables to transform the convected Helmholtz equation into the classical Helmholtz equation
outside the ball, in order to apply a standard BEM on $\Gamma_{\infty}$, and (ii) stabilizing the formulation
to avoid resonant frequencies associated with the eigenvalues of the Laplacian inside the ball of boundary $\Gamma_\infty$.
Consider the product space $\mathbb{H}:=H^{1}\left({\Omega^-}\right)\times H^{-\frac{1}{2}}\left({\Gamma_\infty}\right)\times H^{1}({\Gamma_\infty})$
with inner product
\begin{equation*}
\left(\left(\Phi,\lambda,p\right),\left(\Phi^t,\lambda^t,p^t\right)\right)_{\mathbb{H}}:=\left(\Phi, \Phi^t\right)_{H^{1}\left({\Omega^-}\right)}+\left(\lambda,\lambda^t\right)_{H^{-\frac{1}{2}}\left({\Gamma_\infty}\right)}
+\left(p,p^t\right)_{H^{1}\left({\Gamma_\infty}\right)}.
\end{equation*}
The weak formulation is:
Find $\left(\Phi,\lambda,p\right)\in \mathbb{H}$ such that $\forall\left(\Phi^t,\lambda^t, p^t\right)\in \mathbb{H}$,
\begin{subequations}
\label{eq:weakcouptrans2}
\begin{align}
&\mathcal{V}_{\mu_0}(\Phi,\Phi^t)+\left(N_{\mu_0}({\gamma_0^-}\Phi),{\gamma_0^-}\Phi^t\right)_{\Gamma_{\infty}}\!\!\!\!+\left(\left({\tilde{D}_{\mu_0}}
\!-\!\tfrac{1}{2}I\right)(\lambda),{\gamma_0^-}\Phi^t\right)_{\Gamma_{\infty}}\!\!\!\!\nonumber
\\&\qquad\qquad\qquad\qquad\qquad\qquad\qquad\qquad\qquad\qquad\qquad~
= \left({\gamma_1}{f^{\rm inc}_{\mu_0}},{\gamma_0^-}\Phi^t\right)_{\Gamma_{\infty}}\!\!,\label{eq:weakcouptrans21}\\
&\left(\lambda^t,\left({D}_{\mu_0}\!-\!\tfrac{1}{2}I\right)({\gamma_0^-}\Phi)\right)_{\Gamma_{\infty}}\!\!\!\!\!-\left(\lambda^t,S_{\mu_0}(\lambda)\right)_{\Gamma_{\infty}}
\!\!\!\!-i\left(\lambda^t,p\right)_{\Gamma_{\infty}}\!\!\!\! = -\left(\lambda^t,{\gamma_0}{f^{\rm inc}_{\mu_0}}\right)_{\Gamma_{\infty}}\!\!,\label{eq:weakcouptrans22}\\
&\left(N_{\mu_0}({\gamma_0^-}\Phi),p^t\right)_{\Gamma_{\infty}}\!\!\!\!\!+\left(\left({\tilde{D}}_{\mu_0}\!+\!\tfrac{1}{2}I\right)(\lambda),p^t\right)_{\Gamma_{\infty}}
\!\!\!\!-\delta_{\Gamma_\infty}(p,p^t)= \left({\gamma_1}{f^{\rm inc}_{\mu_0}},p^t\right)_{\Gamma_\infty}\!\!,\label{eq:weakcouptrans23}
\end{align}
\end{subequations}
where $\left(\cdot,\cdot\right)_{\Gamma_\infty}$ denotes the extension of the $L^2(\Gamma_\infty)$-inner product to the duality
pairing on $H^{-\frac{1}{2}}\left(\Gamma_\infty\right)\times H^{\frac{1}{2}}\left(\Gamma_\infty\right)$,
$\gamma_0^-$ is the interior Dirichlet trace on $\Gamma_\infty$, and $f^{\rm inc}_{\mu_0}$ is related to the source term with ${\mu_0}$ the wavenumber of the source (so that the frequency of the source is $\frac{{\mu_0}c}{2\pi}$ in the air at rest),
and where
\begin{equation*}
\delta_{\Gamma_\infty}(p,q):=\left(\vec{\nabla}_{\Gamma_\infty}p, \vec{\nabla}_{\Gamma_\infty}q\right)_{\Gamma_\infty}+
\left(p,q\right)_{\Gamma_\infty},
\end{equation*}
with $\vec{\nabla}_{\Gamma_\infty}$ the surfacic gradient on $\Gamma_\infty$, and
\begin{equation*}
\mathcal{V}_{\mu_0}(\Phi,\Phi^t):=\int_{{\Omega^-}}\Xi\vec{\nabla}\overline{\Phi}\cdot\vec{\nabla}{{\Phi}^t}-\mu_0^2\int_{{\Omega^-}}
\beta \overline{\Phi}{{\Phi}^t}+i{\mu_0} \int_{{\Omega^-}}\vec{V}\cdot\left(\overline{\Phi}\vec{\nabla}{{\Phi}^t}-{{\Phi}^t}
\vec{\nabla}\overline{\Phi}\right),
\end{equation*}
where 
$\beta := r\left(\left(\varsigma+\gamma_\infty^2 P\right)^2-\gamma_\infty^4 M_\infty^2\right)$,
$\vec{V} := r\left(\left(\varsigma+\gamma_\infty^2 P\right)\mathcal{N}\vec{M} -\gamma_\infty^3\vec{M}_\infty\right)$,
$\Xi := r\mathcal{N}\mathcal{O}\mathcal{N}$ with
$r := \frac{\rho}{\rho_\infty}$,
$\varsigma :=\frac{c_\infty}{c}$,
$\gamma_\infty := \frac{1}{\sqrt{1-M_\infty^2}}$,
$P := \vec{M}\cdot\vec{M}_\infty$,
$\mathcal{N} := I+C_\infty\vec{M}_\infty\vec{M}^T_\infty$,
$\mathcal{O} := I-\vec{M}\vec{M}^T$,
and $C_\infty := \frac{\gamma_\infty-1}{M_{\infty}^2}$.
In the above notation, the subscript $\infty$ is used for quantities outside the ball,
$\rho$ is the density of the flow, $c$ is the speed of sound when the flow is at rest, and
$\vec{M}=\frac{\vec{v}}{c}$, where $\vec{v}$ is the velocity of the flow.
The considered finite-dimensional approximation of~\eqref{eq:weakcouptrans2} has $1711$ unknowns.

The potential flow, represented in the right panel of Figure~\ref{fig:mesh1}, is part of the data of the problem.
We perturb this flow uniformly in space.
Although the boundary condition on the solid surface $\Gamma$ and the transmission condition on $\Gamma_{\infty}$ are violated by a nonzero flow perturbation,
the present study can be viewed as a first step towards quantifying uncertainties on the potential flow and
their impact on a quantity of interest.
The flow perturbation takes the form $\delta \vec{M} = \mu_1 \vec{e}_x + \mu_2 \vec{e}_y + \mu_3 \vec{e}_z$.
The quantity of interest is the acoustic pressure at a point located on the axis of symmetry, downstream of the object.
The parameters of the problem are the frequency of the source, and the magnitude of the uniform perturbations of the potential flow in each
Cartesian direction. The frequency varies from $487$ to $1082$ Hz, and the magnitude of the uniform perturbations of the flow varies from $0$ to $0.1$.
A goal-oriented RBM is carried out to select a basis of $\hat{n}=20$ truth solutions using the nonintrusive formula~\eqref{eq:nonintform2} to approximate the matrix of the problem,
the right-hand side of the direct problem,
and the right-hand side of the adjoint problem corresponding to our quantity of interest.
For the matrix, the approximation procedure S1 is applied to
\begin{equation*}
g({\mu_0},r):=\exp\left(i{\mu_0} r\right),~r=\left|x-y\right|,~x,y\in\Gamma_\infty,
\end{equation*}
and the procedure S2($\zeta$) is applied to
\begin{equation*}
z_p({\mu_0},\mu_1, \mu_2, \mu_3):=\left\{
\begin{alignedat}{5}
&\lambda^{\rm S1}_m({\mu_0}),&\qquad &1\leq m\leq d, & \quad &p=m,&\\
&{\mu_0}\lambda^{\rm S1}_m({\mu_0}),&\qquad &1\leq m\leq d, &\quad  &p=m+d,&\\
&\mu_0^2\lambda^{\rm S1}_m({\mu_0}),&\qquad &1\leq m\leq d, &\quad & p=m+2d,&\\
&1,&\qquad &&\quad& p=3d+1,&\\
&{\mu_0},&\qquad &&\quad& p=3d+2,&\\
&\mu_0^2,&\qquad &&\quad &p=3d+3,&\\
&\mu_0^2\mu_3,&\qquad &&\quad& p=3d+4,&\\
&\mu_0^2\mu_3^2,&\qquad &&\quad& p=3d+5,&\\
&{\mu_0}\mu_i,&\qquad &1\leq i\leq 3, &\quad& p=3d+5+i,&\\
&{\mu_0}\mu_i\mu_3,&\qquad &1\leq i\leq 3, &\quad& p=3d+8+i,&\\
&\mu_i\mu_j,&\qquad &1\leq i,j\leq 3, &\quad& p=3d+11+i+3(j-1),&
\end{alignedat}\right.
\end{equation*}
where these parameter dependencies have been identified upon injecting $\vec{M}\rightarrow \vec{M} + \delta \vec{M}$ in~\eqref{eq:weakcouptrans2},
while using that $\vec{M}_{\infty}$ is collinear to $\vec{e}_z$.
For the right-hand side of the direct and dual problems, the approximation procedure S1 is applied to
\begin{equation*}
g({\mu_0},x):=\exp\left(i{\mu_0} |x-x_0|\right),~x\in\Gamma_\infty,
\end{equation*}
where $x_0$ is respectively the position of the source and the point where the quantity of interest is computed, and the
approximation procedure S2($\zeta$) is applied to
\begin{equation*}
z_p({\mu_0}):=\left\{
\begin{alignedat}{2}
&\lambda^{\rm S1}_m({\mu_0}),&\qquad &1\leq m\leq d, \quad p=m\\
&{\mu_0}\lambda^{\rm S1}_m({\mu_0}),&\qquad &1\leq m\leq d, \quad p=m+d.
\end{alignedat}\right.
\end{equation*}
The EIM algorithms are carried out with $d=13$ and $d^z=25$ for the matrix, and $d=13$ and $d^z=18$ for the right-hand side of the direct and dual problems.
Over the considered parameter values, the relative error for the three nonintrusive formulae is of the order of $10^{-12}$ (in Frobenius norm for the matrix and Euclidian norm for the vectors).
The maximum error bound (over a discretization $\mathcal{P}_{\rm trial}$) is of the order of $10^{-7}$, the online stage takes $2.8\times 10^{-3}$ s to compute a reduced solution and the error bound,
while the full direct problem is solved in about $14$~s, which corresponds to an acceleration factor of $5\times 10^3$.

To illustrate the procedure, we suppose that the perturbation of the potential flow is modelled by random variables: the law of $\mu_1$ is a truncated Gaussian,
that of $\mu_2$ is a uniform law, and that of $\mu_3$ is a truncated log-normal law.
The goal is to compute the probability density function of the quantity of interest.
Figure~\ref{fig:many:javaec} shows a screenshot of a java applet computing an histogram of the values taken by the quantity of interest, at a frequency selected by the user.

\begin{figure}[htbp]
	\centering
	\includegraphics [width=0.8\textwidth] {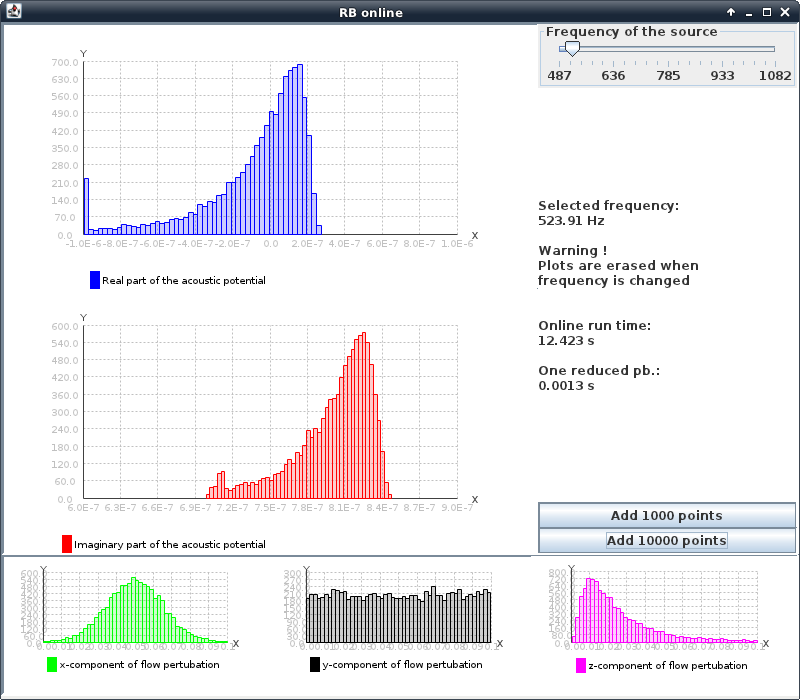}
	\caption{Java applet for the online stage of the RBM for test case 2.
Top panel: histograms of the real part and imaginary part of the quantity of interest. Bottom panel: 
histograms of the three components of the perturbation of the flow}
\label{fig:many:javaec}
\end{figure}

\subsection{A scalable RBM implementation applied to an industrial test case of an impedant aircraft in the air at rest}
\label{scalableRB}

In BEM implementations for the Helmholtz equation, the Fast Multipole Method (FMM) allows one to approximately
compute matrix-vector products, and then approximately solve linear systems
using iterative methods, in complexity scaling with $n\log n$, where $n$ denotes the number of unknowns \cite{precomp,sylvand}.
For boundary integral systems, the matrices are dense, and have a priori $n^2$ nonzero complex coefficients.
In this section, we consider a test case where the matrices $A_{\mu^{{\rm S2}(\zeta)}_m}$, where the $\mu^{{\rm S2}(\zeta)}_m$ are parameter values selected when applying the 
nonintrusive formula~\eqref{eq:nonintform2} to the approximation of $A_\mu$, are so large that they
cannot be stored on the hard drive of the computer used for the simulations. Therefore, each time a matrix-vector product is carried out,
the matrix is assembled, and the FMM is used.

We consider the same problem as in Section~\ref{sec:fin:1}, i.e., the scattering of an incoming acoustic field by an object
whose surface has been coated on three zones by three impedant materials. However,
the considered scattering object is now an aircraft, see Figure~\ref{fig:many:aircraft}.
\begin{figure}[htbp]
	\centering
	\includegraphics [width=0.8\textwidth] {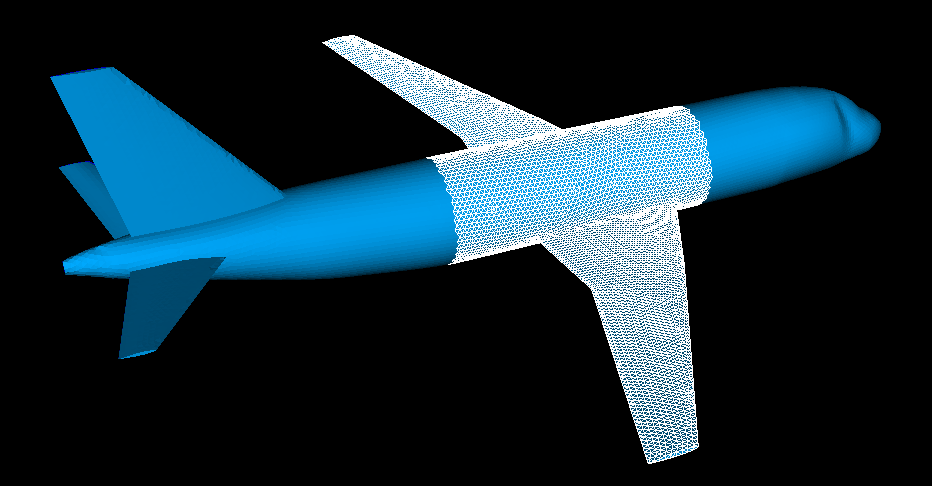}
	\caption{Second impedant surface, with the finest mesh for test case 3}
\label{fig:many:aircraft}
\end{figure}
Two meshes are considered: the coarser one leading to a discrete formulation with $11831$ unknowns, the finer one leading to a discrete formulation with $60866$ unknowns.
The source is an acoustic monopole, located under the right wing of the plane.
The parameters of the problem are the frequency of the source, and the impedance of the three zones composing the surface of the aircraft.
The frequency varies from $27$ to $135$ Hz, and each impedance coefficient varies from $1$ to $2$. We take $532400$ parameter values in
$\mathcal{P}_{\rm trial}$ ($400$ values for the frequency and $11$ values for each impedance coefficient).

First, the RBM is applied to the problem on the coarser mesh.
To recover the affine dependence assumption, we use the nonintrusive approximation formula~\eqref{eq:nonintform2}, with~\eqref{eq:many:matimp1}-\eqref{eq:many:matimp2} for the matrix decomposition
(with now $d=35$ and $d^z=50$) and~\eqref{eq:many:vecmono10}-\eqref{eq:many:vecmono20} for the decomposition of the right-hand side of the problem (with $d=50$ and $d^z=60$).
With $\hat{n}=30$ basis vectors selected by the greedy algorithm, the relative error between the direct solution and the reduced solution, in Euclidian norm,
at the value of the parameters that maximizes the error bound, is less than $3\%$.
The two steps of the procedure with highest computational complexity are the matrix-vector products in FMM and the 
exploration of $\mathcal{P}_{\rm trial}$ by the greedy algorithm. The former can be parallelized, and this is so in ACTIPOLE,
while the latter is trivial to parallelize. Therefore, the procedure is expected to be extremely efficient on distributed
architectures, that is, to be scalable with respect to the number of processors.

We now consider the finer mesh. Each time a vector $U_{\mu_j}$ is added to the reduced basis, we have to compute the $d^z=50$ matrix-vector
products $A_{\mu_m^{{\rm S2}(\zeta)}}U_{\mu_j}$, $1\leq m\leq d^z$, where the $\mu_m^{{\rm S2}(\zeta)}$
are the values of the parameter in the nonintrusive approximation formula~\eqref{eq:nonintform2}. Therefore, in addition to the resolution of the direct problem,
$50$ matrices have to be assembled at each step of the greedy algorithm, which is time-consuming. However, once a matrix is constructed, it is relatively cheap to compute many matrix-vector products
with the same matrix. Hence, a greedy algorithm is not considered on the finer mesh, but
the values of the parameters selected by the greedy algorithm on the coarser mesh are directly used to build the reduced basis on the
finer mesh. This way, the $50$ matrices are constructed once, and only
$30$ matrix-vector products (corresponding to $\hat{n}=30$ values of the parameter selected by the greedy algorithm on
the coarser mesh) are carried out for each matrix.
The simulations have been performed on a laptop with a quadricore CPU, and $4$ GB of RAM.
The formula~\eqref{eq:nonintform2} allows us to directly use the FMM. Without the FMM, this simulation on this computer would have been impossible, since one matrix needs
$60$~GB to be stored.
An approach attempting to compute and store the $50$ matrices of the decomposition would need $3$~TB of memory.

The online stage takes $1.5\times 10^{-2}$ s to compute a reduced solution and the error bound,
while the full direct problem is solved in about $40$ minutes, which corresponds to an acceleration factor of $1.6\times 10^5$.
The offline stages are computed in about 2 days, and the last step of the greedy algorithm in the offline stage of the RBM with the coarser mesh
takes 1 hour. The FMM we used computes matrix-vector products with a relative accuracy of approximately $10^{-3}$; therefore, we cannot expect to achieve a much more accurate RB approximation.

The acoustic field in the exterior domain is computed from the solution to~\eqref{eq:varf} using a representation formula, which is a linear operation.
We consider the acoustic field on an array of $1681$ points located behind the aircraft. 
We can precompute this field using the vectors of the reduced basis as solutions, and the quantity of interest is directly obtained at any parameter value
from these precomputed fields and the components of the reduced solutions.
Figure~\ref{fig:many:javascalable} shows a screenshot of a java applet computing this acoustic field at a set of parameters selected by the user
(frequency and impedance coefficients).
Consider the following parameter values: frequency = $122.3$ Hz, $\mu_1=1.21$, $\mu_2=1.87$, and $\mu_3=1.45$.
The error bound is $5.4\times 10^{-4}$, and the relative error between the direct solution and the reduced solution, in Euclidian norm, is $1\%$.
On the array of $1681$ points located behind the aircraft, the relative error for the scattered acoustic field is $1.4\%$. Figure~\ref{fig:many:diffRB} shows the corresponding acoustic pressure fields
and the difference between the reduced basis and direct solutions.

\begin{figure}[htbp]
	\centering
	\includegraphics [width=0.7\textwidth] {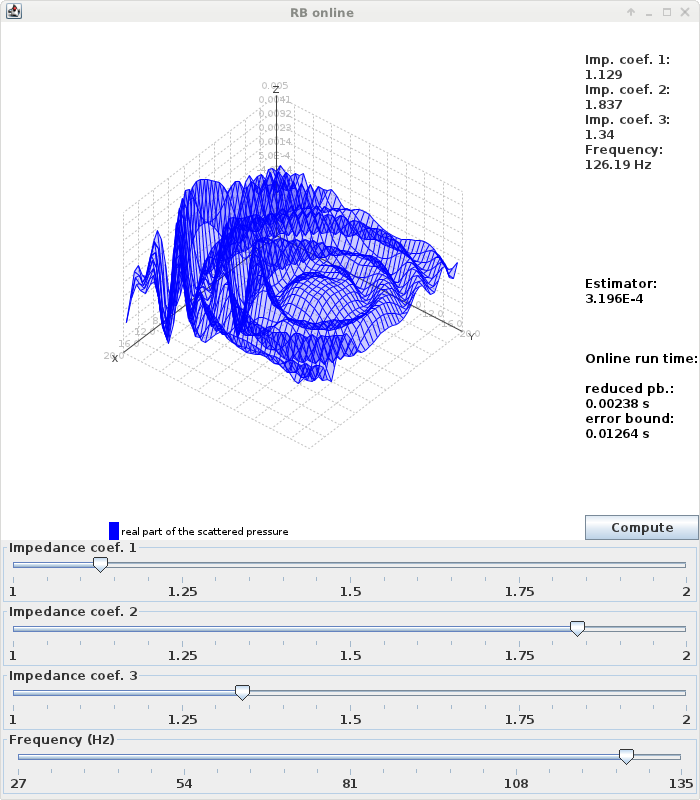}
	\caption{Java applet for the online stage of the RBM for test case 3.
Top panel: total acoustic pressure field on an array of $1681$ points located behind the aircraft.
Bottom panel: selection of the impedance coefficients and of the frequency}
\label{fig:many:javascalable}
\end{figure}

\begin{figure}[htbp]
	\centering
	\includegraphics [width=0.48\textwidth] {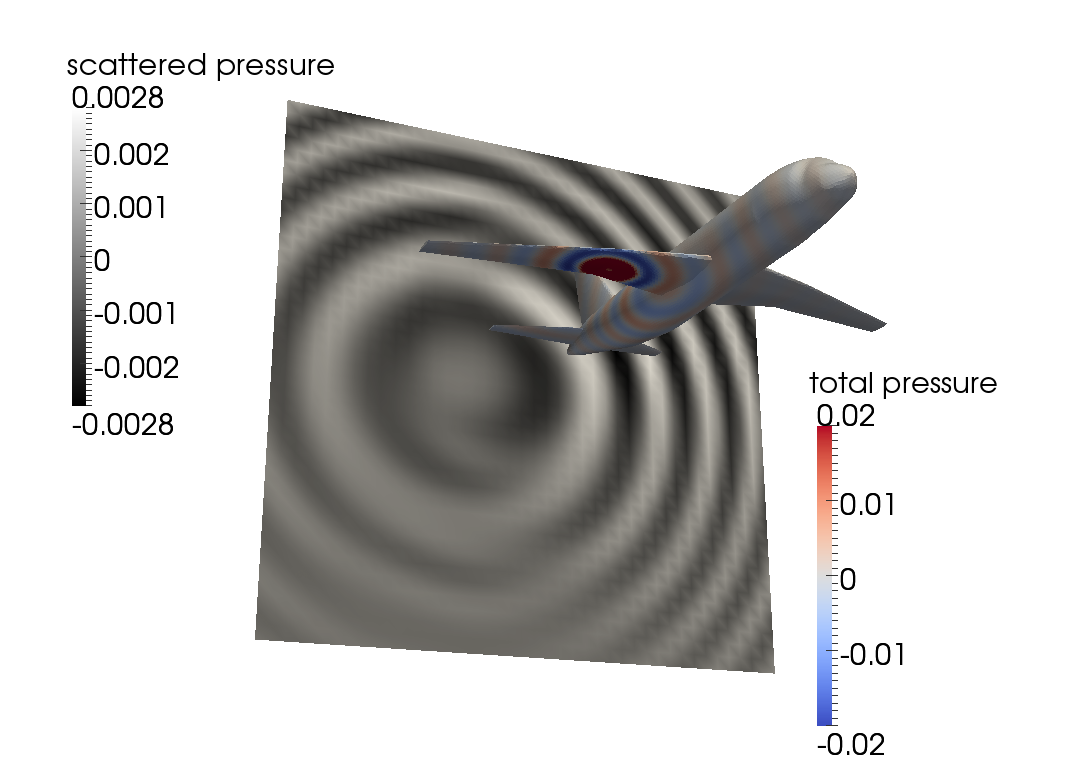}
	\includegraphics [width=0.48\textwidth] {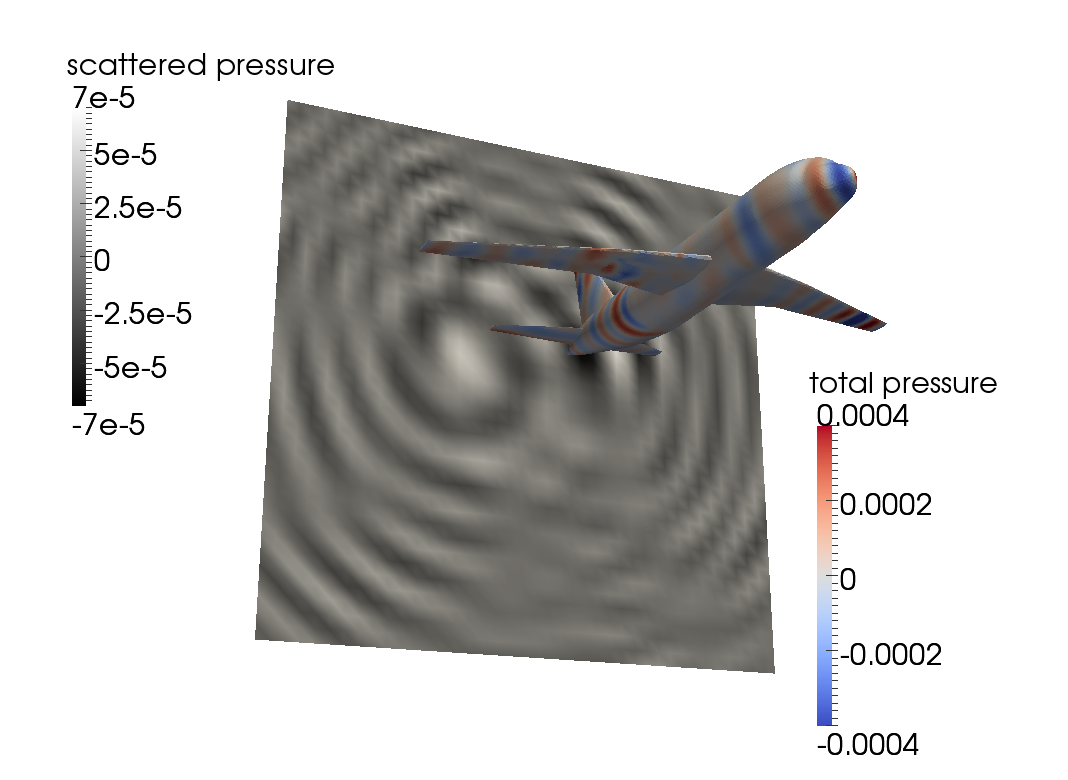}
	\caption{Test case 3. Left: acoustic pressure fields on the aircraft and on an array of $1681$ points located behind the aircraft computed solving the direct problem.
Right: difference between the reduced basis and the direct solution}
\label{fig:many:diffRB}
\end{figure}

\section{Conclusion}
In this work, we derived nonintrusive procedures for the reduced basis method. Their implementation is relatively simple: they have been successfully and easily applied
to the approximation of various matrices and right-hand sides within aeroacoustic simulations. In particular, these procedures allow for the direct use of advanced linear algebra tools, since we are
only dealing with quantities already assembled by the computational code at hand.

\section*{Acknowledgement}
This work was partially supported by EADS Innovation Works.
The authors are thankful to Anthony Patera (MIT) and Guillaume Sylvand (EADS Innovation Works) for fruitful discussions.

\end{document}